\newtheorem{theorem}{Theorem}[subsection]
\newtheorem{lemma}[theorem]{Lemma}
\newtheorem{proposition}[theorem]{Proposition}
\newtheorem{corollary}[theorem]{Corollary} 
\theoremstyle{definition}  
\newtheorem{definition}[theorem]{Definition}
\newtheorem{remark}[theorem]{Remark}
\def\uRep{\underline{\operatorname{Re}}\!\operatorname{p}} 
\newcommand{\Rep}{\operatorname{Rep}}
\newcommand{\id}{\text{id}}
\newcommand{\Rem}{\operatorname{Rem}}
\newcommand{\Add}{\operatorname{Add}}
\newcommand{\emp}{\varnothing}
\newcommand{\biemp}{{(\varnothing,\varnothing)}}
\newcommand{\I}{{\cat I}}
\newcommand{\C}{{\cat C}}
\newcommand{\Z}{\mathbb{Z}}
\newcommand{\K}{\Bbbk}
\newcommand{\cat}{\mathcal}
\newcommand{\black}{\bullet}
\newcommand{\white}{\circ}
\newcommand{\arup}[1]{\stackrel{#1}{\longrightarrow}}
\newcommand{\down}{\vee}
\newcommand{\up}{\wedge}
\newcommand{\cross}{{\Large\text{$\times$}}}
\newcommand{\bigo}{\bigcirc}
\begin{document}

\title{Ideals in Deligne's  category $\uRep(GL_\delta)$}

\author{Jonathan Comes}
\address{J.C.: Department of Mathematics,
University of Oregon, Eugene, OR 97403, USA}
\email{jcomes@uoregon.edu}

\begin{abstract} We give a classification of ideals in $\uRep(GL_\delta)$ for arbitrary $\delta$.

\end{abstract}

\date{\today}

\maketitle  

\setcounter{tocdepth}{2}
%\tableofcontents

\section{Introduction}

\subsection{}\label{uniprop}
Let $\K$ denote a field of characteristic zero, $V$ a $d$-dimensional vector space over $\K$, and $GL_d=GL(V)$ the corresponding general linear group.  The usual tensor product of vector spaces gives $\Rep(GL_d)$, the category of finite dimensional representations of $GL_d$, the structure of a $\K$-linear symmetric monoidal category.     
More generally, given integers $m,n\geq0$ the standard tensor product of super vector spaces gives $\Rep(GL(m|n))$, the category of representations of the general linear supergroup $GL(m|n)$, the structure of a $\K$-linear symmetric monoidal category and the natural representation has categorical dimension (i.e.~super dimension) $m-n$.
Given an arbitrary $\delta\in\K$, Deligne has defined a $\K$-linear  symmetric monoidal category $\uRep(GL_\delta)$ containing a ``natural" object of categorical dimension $\delta$, which is in some precise sense  universal among all such categories \cite[Proposition 10.3]{Del07}.  In particular, $\Rep(GL(m|n))$ along with the natural representation of $GL(m|n)$ prescribe a tensor functor $F_{m|n}:\uRep(GL_\delta)\to\Rep(GL(m|n))$ whenever $\delta=m-n$.  Let $\I(m|n)$ denote the collection of objects sent to zero by $F_{m|n}$.  $\I(m|n)$ is an example of an ``ideal" in $\uRep(GL_\delta)$.  More generally, we have 
\begin{definition}\label{ideal} A collection of objects $\I$ in a braided monoidal category $\C$ is called an \emph{ideal} of $\C$ if the following conditions are satisfied:
\begin{enumerate}
\item[(i)] $X\otimes Y\in\I$ whenever $X\in\C$ and $Y\in\I$.
\item[(ii)] If $X\in\C$, $Y\in\I$ and there exist $\alpha:X\to Y$, $\beta:Y\to X$ such that $\beta\circ\alpha=\id_X$, then $X\in\I$.
\end{enumerate}
\end{definition}
\noindent We will call an ideal in $\C$ \emph{nontrivial} if it contains a nonzero object, and \emph{proper} if it does not contain all objects in $\C$.  The main result of this paper is a classification of ideals of $\uRep(GL_\delta)$.  More precisely, we show  $\{\I(m|n)~|~m,n\in\Z_{\geq0}, m-n=\delta\}$ is a complete set of pairwise distinct nontrivial proper ideals in $\uRep(GL_\delta)$ (see \S\ref{main result}).

\subsection{} A related notion is that of a \emph{tensor ideal}, i.e.~a collection of morphisms which is closed under arbitrary compositions and tensor products (see, for instance \cite[\S2.4]{CK} for a precise definition).  A classification of tensor ideals in $\uRep(GL_\delta)$ would be a nice result, and the main result of this paper can be viewed as a step towards that classification.  For more details on the relationship between ideals and tensor ideals we refer the reader to  \cite[\S2.5]{CK}.

On the other hand, ideals in the sense of Definition \ref{ideal} are interesting in their own right.  For instance, ideals will sometimes admit modified trace and dimension functions in the sense of \cite{GKP}.   In \cite{CK} it was shown that if $\delta\in\Z_{\geq0}$ then the unique nontrivial proper ideal in Deligne's $\uRep(S_\delta)$  admits a modified trace.  It would be interesting to determine which ideals in $\uRep(GL_\delta)$ admit modified traces.  

\subsection{}  The proof of the main result of this paper relies heavily on results from \cite{CW}.  The following items concerning the category $\uRep(GL_\delta)$ can be found in \emph{loc.~cit.}:

\medskip

I. An explicit construction of $\uRep(GL_\delta)$ using walled Brauer diagrams.

II. A classification of indecomposable objects by bipartitions.

III. A procedure for decomposing tensor products of indecomposable objects.

IV. A complete description of the indecomposable objects in the ideal $\I(m|n)$.

\medskip

\noindent  Although we do not need the explicit construction of $\uRep(GL_\delta)$ in this paper, items II, III, and IV above play a crucial role in our proof of the classification of ideals.  We  focus on a special class of indecomposable objects in $\uRep(GL_\delta)$ which are labelled by so-called almost $(m|n)$-cross bipartitions (see \S\ref{almost}).  Roughly speaking, to prove the main result we use item III above to show that every nontrivial proper ideal in $\uRep(GL_\delta)$ is, in some sense, generated by an indecomposable object labelled by an almost $(m|n)$-cross bipartition.  Then we use item IV above to conclude that all nontrivial proper  ideals must be of the form $\I(m|n)$. 

The decomposition of tensor products in $\uRep(GL_\delta)$ is governed by the combinatorial properties of Brundan and Stroppel's weight diagrams introduced in [BS1-5].  Accordingly, these weight diagrams also play a significant role in this paper.

\subsection{}  The paper is organized as follows.  In \S\ref{bwt} we introduce bipartitions and weight diagrams, the combinatorial objects in this paper.  We prove several basic facts concerning these combinatorial objects, paying special attention to the almost $(m|n)$-cross bipartitions.  In \S\ref{decompo} we recall the classification of indecomposable objects in $\uRep(GL_\delta)$, as well as some useful formulae for decomposing tensor products in $\uRep(GL_\delta)$ found in \cite{CW}.  The remainder of \S\ref{decompo} is devoted to proving some technical results on decomposing tensor products.  Finally, in \S\ref{mainresult} we prove our classification of ideals in $\uRep(GL_\delta)$.

\section{Bipartitions and weight diagrams}\label{bwt}

In this section we lay out the combinatorial tools used in this paper, namely bipartitions and the weight diagrams of Brundan and Stroppel.  Since it takes little space to do so, we provide proofs to most of the properties we will need concerning  these combinatorial objects.  However, we note that many these observations can be found (at least implicitly) in [BS1-5] or even in \cite{CW}.  We will follow the notation in \cite{CW}.

\subsection{Bipartitions}   A \emph{partition} is an tuple of nonnegative integers $\alpha=(\alpha_1, \alpha_2,\ldots)$ %whose \emph{parts} (i.e. $\alpha_i$'s) are 
 such that $\alpha_i\geq\alpha_{i+1}$ for all $i>0$, and $\alpha_i=0$ for all but finitely many $i$.  
 Let $\cat{P}$ denote the set of all partitions and let $\varnothing=(0,0,\ldots)\in\cat{P}$.  
 We write $|\alpha|=\sum_{i>0}\alpha_i$ for the \emph{size} of $\alpha$.  Given $\alpha,\beta\in\cat{P}$, we say \emph{$\alpha$ is contained in $\beta$} and write $\alpha\subset\beta$ if $\alpha_i\leq\beta_i$ for all $i>0$.    
 %and we write $\alpha\vdash |\alpha|$.  We define the \emph{length} of $\alpha$, written $l(\alpha)$, to be the smallest positive integer with $\alpha_{l(\alpha)+1}=0$.  We will sometimes write $(\cdots 2^{a_2}1^{a_1})$ for the partition with $a_i$ parts equal to $i$.  
We identify a partition $\alpha$ with its \emph{Young diagram} which consists of left-aligned rows of boxes, with $\alpha_i$ boxes in the $i$th row (reading from top to bottom).  For example, %the Young diagram associated to $(5, 2, 2, 2, 1, 1, 0,\ldots)=(5, 2^3, 1^2)$ is 
$$\includegraphics{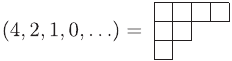}.$$ 
%Next, we let $\alpha^{\bf t}$ denote the \emph{transpose} of $\alpha$, i.e. $\alpha^{\bf t}_i$ is the number of boxes in the the $i$th column of $\alpha$.   For example, $(5,2^3,1^2)^{\bf t}=(6, 4, 1^3)$.  Finally, we let $\cat{P}$ denote the set of all partitions.  

Elements of $\cat{P}\times\cat{P}$ are called \emph{bipartitions}.  Given a bipartition $\lambda$, we let $\lambda^\black$ and $\lambda^\white$ denote the partitions such that $\lambda=(\lambda^\black, \lambda^\white)$.    We will write $|\lambda|=|\lambda^\black|+|\lambda^\white|$ for the \emph{size}\footnote{Note that this differs from the notation in \cite{CW} where $|\lambda|$ denotes $(|\lambda^\black|,|\lambda^\white|)$.} of $\lambda$.  Given bipartitions $\lambda$ and $\mu$, we say \emph{$\lambda$ is contained in $\mu$} and write $\lambda\subset\mu$ if $\lambda^\black\subset\mu^\black$ and $\lambda^\white\subset\mu^\white$.   
%We also have a bipartition-version of a Young diagram which we get as follows:  first place the Young diagram of $\lambda^\circ$ atop the Young diagram of $\lambda^\black$ so that the upper left corners are overlapping, then rotate the Young diagram of $\lambda^\circ$ 180 degrees about its upper left corner.  For example, $$\includegraphics{}.$$
Given a bipartition $\lambda=(\lambda^\black,\lambda^\white)$, let $\Add^\black(\lambda)$ (resp.~$\Add^\white(\lambda)$) denote the set of bipartitions obtained from $\lambda$ by adding a box to $\lambda^\black$ (resp.~$\lambda^\white$).  Similarly, let $\Rem^\black(\lambda)$ (resp.~$\Rem^\white(\lambda)$) denote the set of bipartitions obtained from $\lambda$ by removing a box from $\lambda^\black$ (resp.~$\lambda^\white$).   

\subsection{Weight diagrams}  Given $\delta\in\K$ and a bipartition $\lambda$, set $$\begin{array}{rl}
I_\up(\lambda) & =\{\lambda^\black_1, \lambda^\black_2-1, \lambda^\black_3-2, \ldots\},\\
I_\down(\lambda,\delta) & =\{1-\delta-\lambda^\white_1, 2-\delta-\lambda^\white_2, 3-\delta-\lambda^\white_3, \ldots\}.
\end{array}$$  The \emph{weight diagram of $\lambda$ with parameter $\delta$}, denoted $x_\lambda(\delta)$, is the diagram obtained by labeling the integer vertices on the number line according to the following rule:  label the $i$th vertex by $$\left\{\begin{array}{cl}
\bigo & \text{ if }i\not\in I_\up(\lambda)\cup I_\down(\lambda,\delta),\\
\up & \text{ if }i\in I_\up(\lambda)\setminus I_\down(\lambda,\delta),\\
\down & \text{ if }i\in I_\down(\lambda,\delta)\setminus I_\up(\lambda),\\
\cross & \text{ if }i\in I_\up(\lambda)\cap I_\down(\lambda,\delta).
\end{array}\right.$$  
For example, $$\includegraphics{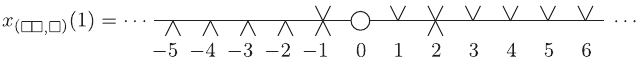},$$
and 
\begin{equation}\label{emptywt}\includegraphics{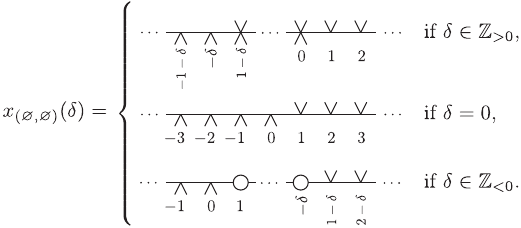}\end{equation}

It will be important for us to understand how adding/removing a box to a bipartition affects its weight diagram.  We record this in the following proposition leaving the proof as a straightforward exercise for the reader.

\begin{proposition}\label{addremwt}  Suppose $\lambda$ and $\mu$ are bipartitions and $\delta\in\Z$.  

(i) $\mu\in\Rem^\black(\lambda)$ (equivalently $\lambda\in\Add^\black(\mu)$) if and only if the labels on $x_\lambda(\delta)$ and $x_\mu(\delta)$ are all the same except an adjacent pair of vertices which are labelled as in one of the following four cases:  $$\begin{array}{|c|c|c|c|c|}\hline
x_\mu(\delta)  & \cross\bigo & \up\bigo &  \cross\down & \up\down\\\hline
x_\lambda(\delta) & \down\up & \bigo\up  & \down\cross & \bigo\cross\\\hline
\end{array}$$

(ii) $\mu\in\Rem^\white(\lambda)$ (equivalently $\lambda\in\Add^\white(\mu)$) if and only if the labels on $x_\lambda(\delta)$ and $x_\mu(\delta)$ are all the same except an adjacent pair of vertices which are labelled as in one of the following four cases:  $$\begin{array}{|c|c|c|c|c|}\hline
x_\mu(\delta) & \bigo\cross & \bigo\down  & \up\cross & \up\down\\\hline
x_\lambda(\delta) & \down\up & \down\bigo  & \cross\up  & \cross\bigo\\\hline
\end{array}$$
\end{proposition}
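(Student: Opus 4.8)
The plan is to prove Proposition \ref{addremwt} by directly analyzing how the sets $I_\up(\lambda)$ and $I_\down(\lambda,\delta)$ change when a box is added to $\lambda^\black$ or $\lambda^\white$. Since the two parts are entirely parallel (part (ii) is obtained from part (i) by swapping the roles of the up and down labels, corresponding to the symmetry $\lambda \mapsto (\lambda^\white, \lambda^\black)$ together with a suitable reflection of the number line), I would prove part (i) in detail and then indicate that part (ii) follows by the same argument applied to $I_\down$. Note that $I_\down(\lambda,\delta)$ depends only on $\lambda^\white$ and is unaffected by changes to $\lambda^\black$, so for part (i) only $I_\up(\lambda)$ moves; this is what makes each of the four cases reduce to a purely local change.

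The key computation is the following. Suppose $\mu \in \Rem^\black(\lambda)$, say $\lambda$ is obtained from $\mu$ by adding a box in row $k$ of the black partition, so $\lambda^\black_k = \mu^\black_k + 1$ and $\lambda^\black_i = \mu^\black_i$ for $i \neq k$. For the result to be a valid partition we need $\mu^\black_{k-1} > \mu^\black_k$ when $k > 1$ (so that $\lambda^\black$ is still weakly decreasing) and of course $\mu^\black_{k+1} \le \mu^\black_k$. Writing the entries of $I_\up$ as $a_i := \lambda^\black_i - (i-1)$ versus $b_i := \mu^\black_i - (i-1)$, one has $b_i = a_i$ for $i \neq k$ and $a_k = b_k + 1$. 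I would first observe that the $a_i$ (and the $b_i$) are strictly decreasing in $i$, so $I_\up$ is genuinely a set with no collisions among its "listed'' entries, and that the only positions on the number line where $x_\lambda(\delta)$ and $x_\mu(\delta)$ can possibly differ are $b_k$ and $a_k = b_k + 1$ — an adjacent pair, as claimed. The constraints $b_{k-1} \ge b_k + 2$ (i.e. $a_{k-1} \ge a_k + 1$) and $b_{k+1} \le b_k - 1$ guarantee that neither $b_k$ nor $b_k+1$ equals any other $a_i$ or $b_i$, so the membership of these two vertices in $I_\up$ flips exactly as $b_k \in I_\up(\mu)$, $b_k \notin I_\up(\lambda)$ and $b_k+1 \notin I_\up(\mu)$, $b_k + 1 \in I_\up(\lambda)$. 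Then I would split into the four cases according to whether each of the two vertices $b_k, b_k+1$ also lies in $I_\down(\lambda,\delta) = I_\down(\mu,\delta)$: this yields precisely the four columns of the table in (i), reading off the labels $\bigo, \up, \down, \cross$ from the labeling rule. Conversely, given an adjacent pair of vertices with labels as in any column of the table, I would read off that $I_\up$ loses the left vertex and gains the right vertex while $I_\down$ is unchanged, reconstruct $k$ and the box being added, and verify the partition inequalities hold — establishing the "if'' direction.

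The main obstacle, and the place where the argument needs care rather than just bookkeeping, is the boundary/degenerate behavior: when $k=1$ (no row above, so the inequality $b_{k-1} \ge b_k + 2$ is vacuous and one must check separately that $b_1 + 1 = a_1$ is not accidentally in $I_\up(\lambda)$ already) and when the added box sits at the "end'' of the partition where $\mu^\black_{k+1} = 0$ for all larger indices — here one must confirm the tail of the sequence $b_i = -(i-1)$ does not interfere with $b_k$ or $b_k+1$, which again follows from $b_{k+1} \le b_k - 1$. I expect a clean way to handle all of this uniformly is to work with the doubly-infinite strictly decreasing sequence $(a_i)_{i \ge 1}$ and just note that changing a single term by $+1$, subject to keeping it strictly decreasing, necessarily swaps exactly one "present'' value for the adjacent one; the partition-boundary issues then disappear into the statement that $0, -1, -2, \dots$ are always the eventual values. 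Once this local lemma about strictly decreasing integer sequences is in hand, both (i) and (ii) are immediate, and the exercise is genuinely routine as the text claims — so I would keep the writeup brief, stating the local lemma and then tabulating the four cases.
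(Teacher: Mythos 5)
Your proof is correct, and the paper explicitly leaves this proposition as an exercise (``We record this in the following proposition leaving the proof as a straightforward exercise for the reader''), so there is no in-text proof to compare against. Your direct computation with the strictly decreasing sequences $a_i = \lambda^\black_i - (i-1)$ and $b_i = \mu^\black_i - (i-1)$ is the natural argument: the partition inequalities translate exactly into $b_{k-1}\ge b_k+2$ and $b_{k+1}\le b_k-1$, which force $I_\up$ to change only by swapping $b_k$ for $b_k+1$, and the four columns of the table are precisely the four choices of $I_\down$-membership for these two adjacent vertices. Your handling of the converse (recovering $\lambda^\white=\mu^\white$ from $I_\down$ being unchanged, then reconstructing the added box from the $I_\up$-swap and verifying the partition inequalities) and of the boundary cases ($k=1$, and the eventual tail $b_i=-(i-1)$) is sound; in particular your ``local lemma'' about perturbing a strictly decreasing integer sequence by $+1$ in one entry is exactly the clean formulation that makes the proof uniform.
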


Using Proposition \ref{addremwt} we can prove the following statement which allows us to recover $\delta$ from any given weight diagram whenever  $\delta$ is an integer.

\begin{corollary}\label{xminuso} For any integer $\delta$ and bipartition $\lambda$, $\delta$ is equal to the number of $\cross$'s minus the number of $\bigo$'s in $x_\lambda(\delta)$.
\end{corollary}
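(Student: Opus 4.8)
The plan is to induct on $|\lambda|$, with Proposition \ref{addremwt} driving the inductive step. First I would note that the count in the statement is finite, hence well-defined: since $\lambda^\black$ and $\lambda^\white$ have only finitely many nonzero parts, $I_\up(\lambda)$ is bounded above and contains every sufficiently negative integer, while $I_\down(\lambda,\delta)$ is bounded below and contains every sufficiently large integer, so all but finitely many vertices of $x_\lambda(\delta)$ are labelled $\up$ (far to the left) or $\down$ (far to the right).

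For the base case $\lambda=\biemp$, one has $I_\up(\biemp)=\{i\in\Z\mid i\le 0\}$ and $I_\down(\biemp,\delta)=\{i\in\Z\mid i\ge 1-\delta\}$, and the count can be read off directly: if $\delta\ge 0$ these sets cover $\Z$ and overlap in the $\delta$ integers $1-\delta,\dots,0$, so $x_{\biemp}(\delta)$ has $\delta$ vertices labelled $\cross$ and none labelled $\bigo$; if $\delta<0$ they are disjoint and omit the $-\delta$ integers $1,\dots,-\delta$, so $x_{\biemp}(\delta)$ has no $\cross$ and $-\delta$ vertices labelled $\bigo$. Either way the count equals $\delta$.

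For the inductive step, assume $|\lambda|\ge 1$ and the statement for all smaller bipartitions. Deleting a corner box from $\lambda^\black$ or $\lambda^\white$ yields a bipartition $\mu$ with $|\mu|=|\lambda|-1$ and $\lambda\in\Add^\black(\mu)\cup\Add^\white(\mu)$. By Proposition \ref{addremwt}, $x_\mu(\delta)$ and $x_\lambda(\delta)$ coincide outside a single adjacent pair of vertices, on which they realize one of the eight configurations listed in the two tables there. I would then check case by case that on that pair the value of (number of $\cross$'s) minus (number of $\bigo$'s) is the same in $x_\mu(\delta)$ as in $x_\lambda(\delta)$: e.g.\ $\cross\bigo$ and $\down\up$ each contribute $0$, $\up\bigo$ and $\bigo\up$ each contribute $-1$, $\cross\down$ and $\down\cross$ each contribute $+1$, $\up\down$ and $\bigo\cross$ each contribute $0$, and the four cases coming from part (ii) of the proposition are handled the same way. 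Since the rest of the diagram is unchanged, $x_\lambda(\delta)$ and $x_\mu(\delta)$ have the same number of $\cross$'s minus $\bigo$'s, so the inductive hypothesis finishes the argument. I do not expect a real obstacle here; the only points requiring attention are the finiteness observation that makes the statement meaningful and the (routine but eight-fold) verification against the Proposition \ref{addremwt} tables.
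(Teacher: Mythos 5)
Your argument is correct and is essentially the paper's proof: reduce to the case $\lambda=\biemp$ by adding/removing one box at a time and observe via Proposition \ref{addremwt} that the count of $\cross$'s minus $\bigo$'s is unchanged at each step. The only cosmetic differences are that the paper cites the displayed picture (\ref{emptywt}) for the base case where you compute it directly, and that you helpfully note up front why the count is finite and hence well-defined.
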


\begin{proof} Every bipartition can be constricted by adding a finite number of boxes to $\biemp$, and by the previous proposition the process of adding boxes to a bipartition does not change the number of $\cross$'s minus the number of $\bigo$'s in the corresponding weight diagram.  The result now follows from (\ref{emptywt}).  
\end{proof}

\subsection{Cap diagrams and the numbers $D_{\lambda,\mu}(\delta)$}  Given $\delta\in\K$ and a bipartition $\lambda$, the \emph{cap diagram} $c_\lambda(\delta)$ is constructed as follows:

\begin{enumerate}{\setlength\itemindent{0in} \item[\emph{Step 0:}]  Start with $x_\lambda(\delta)$.

\item[\emph{Step $n$:}] Draw a cap connecting vertices $i$ and $j$ on the number line whenever (i) $i<j$; (ii) $i$ is labelled by $\down$ and $j$ is labelled by $\up$ in $x_\lambda(\delta)$; and (iii) each integer between $i$ and $j$ in $x_\lambda(\delta)$ is either labelled by $\bigo$, labelled by $\cross$, or already part of a cap from an earlier step.}
\end{enumerate}
This process will terminate in a finite number of steps, leaving us with the cap diagram $c_\lambda(\delta)$.  For an example see \cite[Example 6.3.2]{CW}.  

We say that two vertices in $x_\lambda(\delta)$ are \emph{connected} if there is a cap connecting them in $c_\lambda(\delta)$.  Next, given two bipartitions $\lambda$ and $\mu$, we say \emph{$x_\mu(\delta)$ is linked to $x_\lambda(\delta)$} if $x_\mu(\delta)$ if obtained from $x_\lambda(\delta)$ by interchanging the labels on finitely many pairs of connected vertices in $x_\lambda(\delta)$.  Finally, set $$D_{\lambda,\mu}(\delta)=\left\{\begin{array}{ll} 1, & \text{if }x_\mu(\delta)\text{ is linked to }x_\lambda(\delta);\\ 0, & \text{otherwise}.\end{array}\right.$$

\begin{remark}\label{Dint} If $\delta\not\in\Z$ and $\lambda$ is any bipartition, then none of the vertices in $x_\lambda(\delta)$ are labelled $\down$, hence there are no caps in $c_\lambda(\delta)$, hence $D_{\lambda,\mu}(\delta)=\left\{\begin{array}{ll} 1, & \lambda=\mu;\\ 0, & \lambda\not=\mu.\end{array}\right.$
\end{remark}

The following propositions concerning the numbers $D_{\lambda,\mu}(\delta)$ will be useful later.  

\begin{proposition}\label{Dtriang} $D_{\lambda,\lambda}(\delta)=1$, and for $\mu\not=\lambda$ we have  $D_{\lambda,\mu}(\delta)=0$ unless $\mu\subset\lambda$ and $|\mu|\leq |\lambda|-2$.
\end{proposition}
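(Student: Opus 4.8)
The plan is to analyze what happens to a weight diagram when we interchange the labels on a single connected pair of vertices, and then iterate. By Remark \ref{Dint} we may assume $\delta\in\Z$, since otherwise the statement is trivial. First I would fix $\lambda$ and suppose $D_{\lambda,\mu}(\delta)=1$ with $\mu\neq\lambda$, so $x_\mu(\delta)$ is obtained from $x_\lambda(\delta)$ by swapping the labels on finitely many pairs of connected vertices. Since each cap in $c_\lambda(\delta)$ connects a vertex labelled $\down$ (on the left) to a vertex labelled $\up$ (on the right), swapping such a pair replaces a $\down\cdots\up$ configuration by $\up\cdots\down$. The key numerical observation is to track the function $I_\up$: reading from the right, the labelled positions in $I_\up(\lambda)$ are $\lambda^\black_1 > \lambda^\black_2 - 1 > \lambda^\black_3 - 2 > \cdots$, a strictly decreasing sequence, and $|\lambda^\black|$ is recovered from how far these positions sit above their ``ground state'' $0, -1, -2, \dots$ Similarly $|\lambda^\white|$ is encoded in $I_\down(\lambda,\delta)$.

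The main steps, in order: (1) Show that interchanging the labels on one connected pair $(i,j)$ with $i<j$ moves one ``up-particle'' from position $j$ down to position $i$ and one ``down-particle'' from position $i$ up to position $j$ — the first decreases $|\lambda^\black|$, the second decreases $|\lambda^\white|$, each by the amount $j-i \geq 1$. (I would verify this by checking it against the four cases in each of the tables of Proposition \ref{addremwt}, or more directly from the definitions of $I_\up$ and $I_\down$: moving a position in $I_\up$ from $j$ to $i$ strictly to the left decreases the associated partition, and one must check the partition stays a genuine partition, which is where the ``no $\up$ strictly between, except already-capped'' condition is used.) (2) Conclude that a single swap strictly decreases both $|\mu^\black|$ and $|\mu^\white|$ relative to $|\lambda^\black|$ and $|\lambda^\white|$, and hence that after finitely many swaps $|\mu^\black| < |\lambda^\black|$ and $|\mu^\white| < |\lambda^\white|$ whenever at least one swap was performed; since $\mu \neq \lambda$ forces at least one swap, and since any swap changes $|\mu^\black|$ (and $|\mu^\white|$) by at least one while the total parity argument gives $|\lambda|-|\mu|$ even, we get $|\mu| \leq |\lambda| - 2$. (3) Show $\mu \subset \lambda$, i.e. $\mu^\black \subset \lambda^\black$ and $\mu^\white \subset \lambda^\white$: this follows because moving up-particles leftward (never rightward) can only decrease each $\lambda^\black_i$, and one needs that the particles stay ordered so that the componentwise inequality $\mu^\black_i \leq \lambda^\black_i$ holds; the nesting structure of caps (caps are nested or disjoint, never crossing) ensures the particles do not overtake one another.

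The parity claim $|\lambda|-|\mu| \ge 2$ rather than merely $\ge 1$ deserves a separate word: each individual swap of a pair at distance $j-i$ changes $|\mu^\black|$ by $j-i$ and $|\mu^\white|$ by $j - i$, so it changes $|\mu|$ by $2(j-i)$, which is already even and at least $2$ per swap — so actually $|\lambda| - |\mu|$ is a positive even integer as soon as one swap occurs, giving $|\lambda|-|\mu|\geq 2$ immediately. The hard part will be step (3), the containment $\mu\subset\lambda$: it is intuitively clear that moving particles to the left shrinks the partition, but making this precise requires carefully ruling out the possibility that, after the swaps, the new sequence of $\up$-positions, re-sorted into decreasing order, fails to be componentwise $\leq$ the old one. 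This is where the non-crossing (nested/disjoint) property of cap diagrams is essential, and I would isolate that as a small lemma about how the sorted positions transform under simultaneous swaps along a nested family of caps. Everything else is bookkeeping with the definitions of $I_\up$, $I_\down$, and the tables in Proposition \ref{addremwt}.
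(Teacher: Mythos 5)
Your approach is essentially the paper's: both track how swapping the labels on a connected pair $(i,j)$ replaces the element $j$ of $I_\up(\lambda)$ by $i<j$ and the element $i$ of $I_\down(\lambda,\delta)$ by $j>i$, and both deduce the containment and the bound $|\mu|\leq|\lambda|-2$ from the resulting strict inclusions $\mu^\black\subsetneqq\lambda^\black$ and $\mu^\white\subsetneqq\lambda^\white$. The one point worth correcting is your worry about step (3): the componentwise containment does not need the non-crossing structure of caps at all --- if a set of integers is modified by replacing some of its elements with strictly smaller ones (the new elements being distinct from one another and from the untouched elements, as holds here since the targets are distinct $\down$-positions), then for every threshold the number of elements at or above it can only weakly decrease, so the $\ell$th largest element weakly decreases and $\mu^\black_\ell\leq\lambda^\black_\ell$ for all $\ell$; this elementary observation is what the paper's proof passes over in a single clause.
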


\begin{proof} Assume $\mu\not=\lambda$ is such that $D_{\lambda,\mu}(\delta)=1$.  Let $k$ denote the number of pairs of connected vertices in $x_\lambda(\delta)$ whose labels differ from those in $x_\mu(\delta)$.  Then $I_\up(\mu)$ (resp.~$I_\down(\mu,\delta)$) is obtained from $I_\up(\lambda)$ (resp.~$I_\down(\lambda,\delta)$) by replacing $k$ numbers $a_1,\ldots,a_k$ with numbers $b_1,\ldots,b_k$ such that $b_i<a_i$ (resp.~$b_i>a_i$) for all $1\leq i\leq k$.  In particular, this implies $\mu^\black\subsetneqq\lambda^\black$ (resp.~$\mu^\white\subsetneqq\lambda^\white$).  The result follows.
\end{proof}

\begin{proposition}\label{addrem}
Fix $\delta\in\K$ and a bipartition $\lambda$.

(i) If for all $\mu\in\Rem^\black(\lambda)$ there exists $\nu\in\Add^\white(\lambda)$ with $D_{\nu,\mu}(\delta)=1$, then $x_\lambda(\delta)$ has no adjacent labels of the form $\down\up$, $\bigo\up$, or $\down\cross$.

(ii) If for all $\mu\in\Rem^\white(\lambda)$ there exists $\nu\in\Add^\black(\lambda)$ with $D_{\nu,\mu}(\delta)=1$, then $x_\lambda(\delta)$ has no adjacent labels of the form  $\down\up$, $\down\bigo$, or $\cross\up$.  
\end{proposition}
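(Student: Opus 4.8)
The two parts are symmetric (interchanging the roles of $\black$ and $\white$, which corresponds to a left-right flip of weight diagrams together with the substitution $\delta\mapsto -\delta$), so I would prove (i) in detail and remark that (ii) follows by the same argument. My strategy is to prove the contrapositive: suppose $x_\lambda(\delta)$ \emph{does} contain an adjacent pair labelled $\down\up$, $\bigo\up$, or $\down\cross$, and produce some $\mu\in\Rem^\black(\lambda)$ for which \emph{no} $\nu\in\Add^\white(\lambda)$ satisfies $D_{\nu,\mu}(\delta)=1$.

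The key observation is the following combinatorial chain. By Proposition \ref{addremwt}(i), each of the three forbidden adjacent pairs $\down\up$, $\bigo\up$, $\down\cross$ is exactly the ``$x_\lambda$ side'' of one of the four cases describing a black removal; so if $x_\lambda(\delta)$ contains such a pair at some adjacent vertices, there is a canonical $\mu\in\Rem^\black(\lambda)$ obtained by performing that local relabelling ($\down\up\rightsquigarrow\cross\bigo$, $\bigo\up\rightsquigarrow\up\bigo$, $\down\cross\rightsquigarrow\cross\down$), with all other labels unchanged. Having fixed this $\mu$, I must show no white-addition $\nu$ of $\lambda$ is linked to $\mu$. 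First, Proposition \ref{addremwt}(ii) pins down the possible shapes of $x_\nu(\delta)$: it differs from $x_\lambda(\delta)$ at one adjacent pair, with the $x_\lambda$ side being $\down\up$, $\down\bigo$, $\cross\up$, or $\cross\bigo$ and the $x_\nu$ side the corresponding entry of the top row. Then I compare with the requirement $D_{\nu,\mu}(\delta)=1$, i.e.\ that $x_\mu(\delta)$ be linked to $x_\nu(\delta)$: by Proposition \ref{Dtriang} this forces $\mu\subset\nu$ and $|\mu|\le|\nu|-2$. But $|\mu|=|\lambda|-1=|\nu|-2$, so equality holds and $\mu$ must be obtained from $\nu$ by interchanging labels on \emph{exactly one} connected (capped) pair of vertices of $x_\nu(\delta)$, that pair being $\down$-then-$\up$ in $x_\nu$ and $\up$-then-$\down$ (as $I_\up$, $I_\down$ entries) in $x_\mu$ — more precisely, in the cap, the left vertex goes from $\down$ to (its $x_\mu$ label) and the right from $\up$ to (its $x_\mu$ label), consistent with Step $n$ of the cap-diagram construction.

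So the heart of the proof is a finite case check: for each of the three starting configurations in $x_\lambda(\delta)$ (hence three explicit $\mu$'s) and each of the four possible local moves producing a candidate $\nu$, one examines whether $x_\mu(\delta)$ can be reached from $x_\nu(\delta)$ by flipping exactly one capped pair. The point is that $x_\mu$ and $x_\nu$ each differ from $x_\lambda$ in only one or two adjacent spots, so $x_\mu$ and $x_\nu$ agree outside a window of at most four consecutive vertices; within that small window one can enumerate the label patterns and the possible caps directly, using the ``defect''/weight constraints (Corollary \ref{xminuso} keeps the count of $\cross$'s minus $\bigo$'s fixed at $\delta$ throughout, which rules out several pairings) and the rule that a cap may only close over vertices labelled $\bigo$, $\cross$, or already capped. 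In every case one finds the flip either is not supported on a legal cap of $c_\nu(\delta)$ or would change more than the intended pair, giving a contradiction.

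I expect the main obstacle to be bookkeeping: one must be careful that the ``adjacent pair'' in $x_\lambda(\delta)$ witnessing the forbidden pattern, the pair altered to form $\mu$, and the pair altered to form $\nu$ can overlap or be disjoint, so the window where $x_\mu$ and $x_\nu$ disagree is not entirely uniform and a few sub-cases (e.g.\ when the $\nu$-move and the $\mu$-move touch a common vertex) need separate, if short, attention. A clean way to organize this is to argue at the level of the sets $I_\up,I_\down$: from $D_{\nu,\mu}(\delta)=1$ and $|\mu|=|\nu|-2$ we get that $I_\up(\mu)$ (resp. $I_\down(\mu,\delta)$) is obtained from $I_\up(\nu)$ (resp. $I_\down(\nu,\delta)$) by moving a single entry strictly left (resp. strictly right) along a cap; cross-referencing this with the explicit descriptions of $I_\up(\mu),I_\down(\mu,\delta)$ in terms of $\lambda$ and of $I_\up(\nu),I_\down(\nu,\delta)$ in terms of $\lambda$ yields the contradiction with minimal diagram-chasing. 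Once (i) is done, (ii) is immediate by the black/white symmetry noted above.
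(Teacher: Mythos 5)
Your plan shares the paper's contrapositive structure and correctly identifies Proposition \ref{addremwt} as the main input, but at the decisive step you propose a case-by-case enumeration of relative positions and caps, whereas the paper's argument is nearly immediate. The observation you are missing is that linking, by definition, only exchanges $\down$ and $\up$ labels on connected vertices, so it preserves the \emph{exact positions} of the $\cross$'s and $\bigo$'s — a much stronger invariant than the ``number of $\cross$'s minus number of $\bigo$'s'' of Corollary \ref{xminuso} that you invoke. Concretely, from the table in Proposition \ref{addremwt}(i), each forbidden pattern ($\down\up$, $\bigo\up$, or $\down\cross$ in $x_\lambda(\delta)$) produces some $\mu\in\Rem^\black(\lambda)$ whose weight diagram has a $\bigo$ at a vertex where $x_\lambda(\delta)$ has a $\up$, or a $\cross$ where it has a $\down$; and any $\nu$ with $D_{\nu,\mu}(\delta)=1$ then has the same $\bigo$ (resp.\ $\cross$) at that vertex. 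But inspection of the table in Proposition \ref{addremwt}(ii) shows that a white-addition never places a $\bigo$ where $x_\lambda(\delta)$ has a $\up$, nor a $\cross$ where it has a $\down$; hence $\nu\not\in\Add^\white(\lambda)$, with no cap or window enumeration required. Your route could be pushed through, but two points need repair: the reduction to a single flip of \emph{adjacent} vertices does not follow from the statement of Proposition \ref{Dtriang} alone (you need the additional fact that a flip of a connected pair at distance $d$ changes the size by $2d$, which lives in the proof rather than the statement), and the black/white symmetry you invoke for (ii) is a left-right reflection of weight diagrams together with the swap $\up\leftrightarrow\down$ with $\delta$ held \emph{fixed}, not with $\delta\mapsto-\delta$.
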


\begin{proof} (i) Suppose $x_\lambda(\delta)$ has adjacent labels of the form $\bigo\up$, $\down\cross$, or $\down\up$.  Then by Proposition \ref{addremwt}(i) there exists $\mu\in\Rem^\black(\lambda)$ such that $x_\mu(\delta)$ has either a $\bigo$ at a vertex where $x_\lambda(\delta)$ has a $\up$, or  a $\cross$ where  $x_\lambda(\delta)$ has a $\down$.  Now suppose $\nu$ is a bipartition with $D_{\nu,\mu}(\delta)=1$.  Then $x_\nu(\delta)$ must also have either a $\bigo$ at a vertex where $x_\lambda(\delta)$ has a $\up$, or  a $\cross$ where  $x_\lambda(\delta)$ has a $\down$.  Thus, by Proposition \ref{addremwt}(ii), $\nu\not\in\Add^\white(\lambda)$.  Statement (i) follows.  The proof of (ii) is similar.
\end{proof}

\subsection{Almost $(m|n)$-cross bipartitions}\label{almost}  Fix integers $m,n\geq0$.  Following \cite{CW} we call a bipartition $\lambda$  \emph{$(m|n)$-cross}\footnote{See \cite[\S8.7]{CW} for a pictorial description of $(m|n)$-cross bipartitions which justifies its name.} if there exists $k$ with $0\leq k\leq m$ such that $\lambda^\black_{k+1}+\lambda^\white_{m-k+1}\leq n$.  We call $\lambda$ \emph{almost $(m|n)$-cross} if it is not $(m|n)$-cross, but any bipartition strictly contained in $\lambda$ is $(m|n)$-cross.  It is easy to show that $\lambda$ is almost $(m|n)$-cross if and only if $\lambda^\black_{m+2}=\lambda^\white_{m+2}=0$ and $\lambda^\black_{k+1}+\lambda^\white_{m-k+1}=n+1$ whenever $0\leq k\leq m$.  From this perspective we see that $\lambda$ is almost $(m|n)$-cross if and only if the Young diagrams of $\lambda^\black$ and $\lambda^\white$ can be glued together to form an $(m+1)\times(n+1)$ rectangle as in the following picture:
\begin{equation}\label{rectangle}\includegraphics{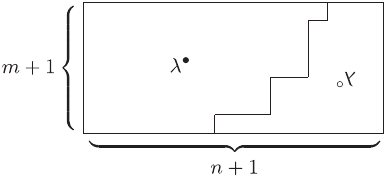}\end{equation}
The following proposition lists a couple useful properties of almost $(m|n)$-cross bipartitions.

\begin{proposition}\label{alprop}  Suppose $\lambda$ is an almost $(m|n)$-cross bipartition.

(i) $|\lambda|=(m+1)(n+1)$.  In particular, all almost $(m|n)$-cross bipartition have the same size.

(ii) For any integers $m',n'\geq0$ with $m'\leq m$ and $n'\leq n$ there exists an almost $(m'|n')$-cross bipartition $\mu$ such that $\mu\subset\lambda$. 
\end{proposition}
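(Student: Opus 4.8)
The plan is to base both parts on the explicit characterization recorded just before the proposition: $\lambda$ is almost $(m|n)$-cross precisely when $\lambda^\black_{m+2}=\lambda^\white_{m+2}=0$ and $\lambda^\black_{k+1}+\lambda^\white_{m-k+1}=n+1$ for all $0\le k\le m$.

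Part (i) is then a bookkeeping computation. Since $\lambda^\black$ and $\lambda^\white$ are partitions with $\lambda^\black_{m+2}=\lambda^\white_{m+2}=0$, all of their parts in positions beyond $m+1$ vanish, so $|\lambda|=\sum_{i=1}^{m+1}\lambda^\black_i+\sum_{j=1}^{m+1}\lambda^\white_j$. Reindexing the second sum via $j\mapsto m+2-j$ and grouping termwise gives $|\lambda|=\sum_{i=1}^{m+1}\bigl(\lambda^\black_i+\lambda^\white_{m+2-i}\bigr)$, and each of the $m+1$ summands equals $n+1$ by the pairing identity (taking $k=i-1$). Hence $|\lambda|=(m+1)(n+1)$, independently of $\lambda$.

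For part (ii) I would isolate two elementary ``shrinking'' moves and then iterate them. The first move (available when $n\ge1$): the tuple $\mu$ given by $\mu^\black_i=\min(\lambda^\black_i,n)$ and $\mu^\white_j=\max(0,\lambda^\white_j-1)$ for all $i,j$ is an almost $(m|n-1)$-cross bipartition satisfying $\mu\subset\lambda$. The second move (available when $m\ge1$): the tuple $\nu$ given by $\nu^\black_i=\lambda^\black_i$ for $i\le m$, $\nu^\black_i=0$ for $i>m$, and $\nu^\white_j=\lambda^\white_{j+1}$ for all $j$ is an almost $(m-1|n)$-cross bipartition satisfying $\nu\subset\lambda$. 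Pictorially these are nothing but deleting the rightmost column, respectively the bottom row, of the $(m+1)\times(n+1)$ rectangle in (\ref{rectangle}); formally, each claim reduces to checking that the new tuples are weakly decreasing, are contained in $\lambda$, and satisfy the pairing identity for the new parameters, and all of these follow directly from $\lambda^\black_i+\lambda^\white_{m+2-i}=n+1$. Given $m'\le m$ and $n'\le n$, one then applies the second move $m-m'$ times and afterwards the first move $n-n'$ times; by transitivity of $\subset$ this produces an almost $(m'|n')$-cross bipartition contained in $\lambda$.

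The one point that needs a moment's care is the pairing identity after the second (row-removing) move: the identity to be verified now pairs $\nu^\black_i$ with $\nu^\white_{m+1-i}$ rather than with $\nu^\white_{m+2-i}$, and one must observe that the downward shift $\nu^\white_j=\lambda^\white_{j+1}$ is exactly what compensates for this change of index. Apart from that, the argument is a string of routine monotonicity and containment checks, so I do not anticipate a genuine obstacle --- the content lies entirely in choosing the two moves correctly.
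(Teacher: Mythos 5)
Your proof is correct and takes essentially the same route as the paper: both rely on the characterization just before the proposition (equivalently, the rectangle picture in (\ref{rectangle})) for part (i), and both prove (ii) by removing rows and columns of the rectangle — you remove the bottom row and rightmost column where the paper removes the top rows and leftmost columns, but the paper's own footnote notes any choice works. You have simply written out the row/column removals as explicit formulas on partitions rather than leaving them as pictorial operations.
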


\begin{proof} (i) is immediate from (\ref{rectangle}).  For (ii), remove the top $m-m'$ rows and the leftmost $n-n'$ columns of (\ref{rectangle});  the bipartition which corresponds to the remaining $(m'+1)\times(n'+1)$ rectangle is one such $\mu$.\footnote{In fact, one could remove any $m-m'$ rows and $n-n'$ columns.}
\end{proof}

Next, we determine the weight diagrams of almost $(m|n)$-cross bipartitions.

\begin{proposition}\label{alwt} A bipartition $\lambda$ is almost $(m|n)$-cross if and only if its weight diagram is of the form 
\begin{equation*}\label{almostwt}\includegraphics{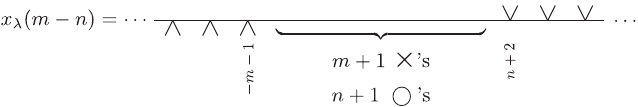}\end{equation*}
\end{proposition}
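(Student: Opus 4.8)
The plan is to compute the weight diagram of an almost $(m|n)$-cross bipartition directly from the definitions of $I_\up(\lambda)$ and $I_\down(\lambda,\delta)$, using the rectangular description (\ref{rectangle}), and then to argue the converse by showing that a bipartition with the displayed weight diagram necessarily has $\lambda^\black$ and $\lambda^\white$ filling an $(m+1)\times(n+1)$ rectangle.

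First I would handle the forward direction. Suppose $\lambda$ is almost $(m|n)$-cross, so by (\ref{rectangle}) we have $\lambda^\black_{m+2}=\lambda^\white_{m+2}=0$ and $\lambda^\black_{k+1}+\lambda^\white_{m-k+1}=n+1$ for $0\leq k\leq m$. Using $\delta=m-n$ (note Corollary \ref{xminuso} forces us to work with this $\delta$, since the displayed diagram in Proposition \ref{alwt} will have a fixed number of $\cross$'s minus $\bigo$'s), I would write out $I_\up(\lambda)=\{\lambda^\black_i-(i-1)\}_{i\geq1}$ and $I_\down(\lambda,\delta)=\{i-\delta-\lambda^\white_i\}_{i\geq1}$. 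For $i\geq m+2$ both sequences stabilize to the ``background'' values $\{-(i-1)\}$ and $\{i-\delta\}$, which determine the $\up$'s far to the left and $\down$'s far to the right (or $\bigo$'s, depending on sign of $\delta$); the interesting behavior is confined to a bounded window governed by the $m+1$ rows. The key computational point is that the relation $\lambda^\black_{k+1}+\lambda^\white_{m-k+1}=n+1$ translates, after reindexing $j=m-k+1$, into an identity relating the $(k+1)$st element of $I_\up(\lambda)$ to the $j$th element of $I_\down(\lambda,\delta)$, and this forces the two finite sets $\{\lambda^\black_1,\ldots,\lambda^\black_{m+1}-m\}$ and $\{1-\delta-\lambda^\white_1,\ldots,(m+1)-\delta-\lambda^\white_{m+1}\}$ to interleave in exactly the pattern that produces the picture in \cite{CW} (whatever \texttt{i1.pdf} depicts — presumably a block of $\cross$'s or a specific alternating arrangement of $\up$, $\down$, $\cross$, $\bigo$). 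I would verify that no element of $I_\up(\lambda)$ coincides with an element of $I_\down(\lambda,\delta)$ except where the picture shows a $\cross$, by a monotonicity/counting argument on the two strictly decreasing sequences $\lambda^\black_i-(i-1)$ and increasing sequences $i-\delta-\lambda^\white_i$.

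For the converse, I would start from a bipartition $\lambda$ whose weight diagram equals the displayed form and read off $I_\up(\lambda)$ and $I_\down(\lambda,\delta)$ from the positions of $\up$, $\down$, $\cross$. Since $I_\up(\lambda)$ determines $\lambda^\black$ uniquely (via $\lambda^\black_i = (\text{$i$th largest element of }I_\up(\lambda)) + (i-1)$) and likewise $I_\down(\lambda,\delta)$ determines $\lambda^\white$, the shape of the diagram pins down $\lambda$, and a direct check shows the resulting $\lambda^\black,\lambda^\white$ satisfy $\lambda^\black_{m+2}=\lambda^\white_{m+2}=0$ and the rectangle identity, hence $\lambda$ is almost $(m|n)$-cross. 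Alternatively — and this may be cleaner — I would use Proposition \ref{alprop}(i) together with Proposition \ref{addremwt}: any bipartition strictly contained in $\lambda$ is obtained by removing boxes, each removal transforms the diagram according to the tables in Proposition \ref{addremwt}, and one checks that from the displayed diagram every single box-removal move produces a diagram with no $\cross$ at a ``bad'' position, i.e. yields an $(m|n)$-cross bipartition, while $\lambda$ itself is not $(m|n)$-cross because its diagram still has the full block.

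The main obstacle I anticipate is purely bookkeeping: matching the abstract interleaving of the two integer sequences against the specific picture \texttt{i1.pdf} (which I cannot see in this excerpt), and in particular being careful about the two cases $\delta\geq0$ versus $\delta<0$ (equivalently $m\geq n$ vs. $m<n$), since the asymptotic labels on the far left and far right of the number line switch between $\up$ and $\bigo$, and between $\down$ and $\bigo$, accordingly. Getting the indexing conventions in $I_\up$ and $I_\down$ to line up with the geometry of (\ref{rectangle}) — especially the off-by-one shifts — is where the real care is needed, but none of it is deep; once the reindexing $j = m-k+1$ is in place the rectangle identity does all the work.
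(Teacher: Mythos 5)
Your direct-computation approach is exactly the paper's: the paper rewrites the rectangle identity $\lambda^\black_{k+1}+\lambda^\white_{m-k+1}=n+1$ (via the reindexing $i=m-k+1$) as $i-(m-n)-\lambda^\white_i=\lambda^\black_{m-i+2}-(m-i+1)$ for $1\leq i\leq m+1$, which says precisely that the first $m+1$ entries of $I_\down(\lambda,m-n)$ coincide, in reverse order, with the first $m+1$ entries of $I_\up(\lambda)$; combined with $\lambda^\black_{m+2}=\lambda^\white_{m+2}=0$ this pins down $I_\up$ and $I_\down$ completely and hence the weight diagram, in both directions at once. One small point: your worry about the sign of $\delta=m-n$ flipping the asymptotic labels is unfounded here --- for any bipartition and any integer $\delta$ the far-left vertices are eventually $\up$ (coming from $-(i-1)\in I_\up$ for large $i$) and the far-right vertices eventually $\down$ (from $i-\delta\in I_\down$), so the only case analysis is where the finite block of $\cross$'s and $\bigo$'s sits, not which symbols appear at infinity.
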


\begin{proof}  The condition $\lambda^\black_{k+1}+\lambda^\white_{m-k+1}=n+1$ whenever $0\leq k\leq m$ is equivalent to $i-(m-n)-\lambda^\white_i=\lambda^\black_{m-i+2}-(m-i+1)$ whenever $1\leq i\leq m+1$.  Hence, $\lambda$ is almost $(m|n)$-cross if and only if $I_\up(\lambda)=\{\lambda^\black_1,\lambda^\black_2-1,\ldots,\lambda^\black_{m+1}-m, -m-1, -m-2,\ldots\}$ and  $I_\down(\lambda,m-n)  =\{\lambda^\black_{m+1}-m,\ldots,\lambda^\black_2-1,\lambda^\black_1, n+2, n+3,\ldots\}$, which is equivalent to $x_\lambda(m-n)$ having the desired form.
\end{proof}

\begin{corollary}\label{linkage} Suppose $\lambda$ and $\mu$ are bipartitions.

(i) If $\lambda$ is almost $(m|n)$-cross, then $D_{\lambda,\mu}(m-n)=0$ whenever $\mu\not=\lambda$.

(ii) If $\mu$ is almost $(m|n)$-cross, then $D_{\lambda,\mu}(m-n)=0$ unless $\lambda=\mu$ or $|\lambda|>|\mu|+2$.

\end{corollary}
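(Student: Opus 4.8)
The plan is to determine, via Proposition~\ref{alwt}, the precise shape of the weight diagram of an almost $(m|n)$-cross bipartition and then to exploit the fact that in such a diagram every vertex labelled $\up$ lies far to the left of every vertex labelled $\down$. For part (i), Proposition~\ref{alwt} shows that in $x_\lambda(m-n)$ every $\up$ occurs at a position $\le -m-1$ and every $\down$ at a position $\ge n+2$, with only $\cross$'s and $\bigo$'s in between. Since a cap of $c_\lambda(m-n)$ joins a vertex labelled $\down$ on the left to one labelled $\up$ on the right, and here no $\down$ lies to the left of any $\up$, the cap diagram $c_\lambda(m-n)$ has no caps. Hence the only weight diagram linked to $x_\lambda(m-n)$ is $x_\lambda(m-n)$ itself; since a weight diagram with a fixed parameter determines its bipartition, $D_{\lambda,\mu}(m-n)=1$ forces $\mu=\lambda$, which proves (i).

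For part (ii), suppose $D_{\lambda,\mu}(m-n)=1$ with $\mu\neq\lambda$; I will show $|\lambda|>|\mu|+2$. Then $x_\mu(m-n)$ is obtained from $x_\lambda(m-n)$ by interchanging the labels on the two endpoints of each cap in a finite nonempty set of caps $C_1,\dots,C_k$ of $c_\lambda(m-n)$; say $C_t$ joins vertices $i_t<j_t$, so that $i_t$ carries the label $\down$ and $j_t$ the label $\up$ in $x_\lambda(m-n)$, while these two labels are swapped in $x_\mu(m-n)$. Because $\mu$ is almost $(m|n)$-cross, Proposition~\ref{alwt} forces every $\up$ of $x_\mu(m-n)$ to occur at a position $\le -m-1$ and every $\down$ at a position $\ge n+2$; applying this to $i_t$ (now labelled $\up$) and to $j_t$ (now labelled $\down$) gives $j_t-i_t\ge (n+2)-(-m-1)=m+n+3\ge 2$ for every $t$.

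It remains to compute the change in size caused by the flips. From $I_\up(\mu)=\bigl(I_\up(\lambda)\setminus\{j_1,\dots,j_k\}\bigr)\cup\{i_1,\dots,i_k\}$, together with the observation that $|\alpha^\black|=\sum_{a\in I_\up(\alpha)}a-\sum_{b\le 0}b$ (a finite sum, since the two index sets agree far to the left), and noting that the $i_t$ and the $j_s$ are pairwise distinct with $i_t\notin I_\up(\lambda)$ and $j_t\in I_\up(\lambda)$, one obtains $|\lambda^\black|-|\mu^\black|=\sum_t(j_t-i_t)$; the analogous computation with $I_\down(\cdot,m-n)$ yields $|\lambda^\white|-|\mu^\white|=\sum_t(j_t-i_t)$. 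Hence $|\lambda|-|\mu|=2\sum_t(j_t-i_t)\ge 4>2$, which proves (ii). The one place requiring genuine care, and the step I expect to be the main (if mild) obstacle, is the bookkeeping in this final size computation; everything else is a direct reading of Proposition~\ref{alwt} and of the definition of the cap diagram.
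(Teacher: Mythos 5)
Your proof is correct and, for part (i), follows the same route as the paper: Proposition~\ref{alwt} places every $\up$ strictly to the left of every $\down$ in $x_\lambda(m-n)$, so $c_\lambda(m-n)$ has no caps and $D_{\lambda,\mu}(m-n)=1$ forces $\mu=\lambda$. For part (ii) your argument diverges modestly from the paper's. The paper observes that, since linking preserves the positions of $\cross$'s and $\bigo$'s, the weight diagram $x_\lambda(m-n)$ has all its $\cross$'s and $\bigo$'s in the block $\{-m,\dots,n+1\}$, deduces that no cap in $c_\lambda(m-n)$ joins adjacent vertices, and then simply cites \cite[Proposition 6.3.6]{CW} for the conclusion $|\lambda|>|\mu|+2$. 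You instead make the size estimate explicit: from $I_\up(\mu)=(I_\up(\lambda)\setminus\{j_t\})\cup\{i_t\}$ and the analogous identity for $I_\down$ you compute $|\lambda|-|\mu|=2\sum_t(j_t-i_t)$, and the constraint from Proposition~\ref{alwt} applied to $\mu$ (that each swapped $i_t\le -m-1$ becomes an $\up$ and each swapped $j_t\ge n+2$ becomes a $\down$) then forces every $j_t-i_t\ge m+n+3$. This is a more self-contained derivation, and in fact yields the sharper bound $|\lambda|-|\mu|\ge 2(m+n+3)\ge 6$; your stated inequality $j_t-i_t\ge 2$ should read $\ge 3$, a harmless slip that does not affect the conclusion. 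The only place where care is genuinely required is exactly where you flag it, namely the finite-difference bookkeeping identifying $|\alpha^\black|$ (resp.\ $|\alpha^\white|$) with the truncated sum over $I_\up(\alpha)$ (resp.\ $I_\down(\alpha,\delta)$) relative to the empty bipartition; this is standard and you handle it correctly.
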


\begin{proof} If $\lambda$ is almost $(m|n)$-cross, then by Proposition \ref{alwt} $c_\lambda(m-n)$ has no caps which implies (i).  To prove (ii), assume that $\mu$ is almost $(m|n)$-cross and $D_{\lambda,\mu}(m-n)=1$.  Since $\mu$ is linked to $\lambda$, the $\cross$'s and $\bigo$'s in $x_\lambda(m-n)$ are in the same positions as those in $x_\mu(m-n)$.  Hence, by Proposition \ref{alwt} applied to $\mu$, there are no adjacent connected vertices in $x_\lambda(m-n)$.  It follows that $|\lambda|>|\mu|+2$ (see for instance \cite[Proposition 6.3.6]{CW}).
\end{proof}

\subsection{One-box-bumps}\label{OBB}  
We close this section by describing a sequence of moves which transforms any given almost $(m|n)$-cross bipartition into any other almost $(m|n)$-cross bipartition.  To start, fix an almost $(m|n)$-cross bipartition $\lambda$.  Notice that if $\lambda^\black$ (resp.~$\lambda^\white$) has a removable box in its $i$th row, then $\lambda^\white$ (resp.~$\lambda^\black$) has an addable box in the $(m+2-i)$th row (see (\ref{rectangle})).  The procedure of removing a box from the $i$th row of $\lambda^\black$ (resp.~$\lambda^\white$) and adding a box to the $(m+2-i)$th row of $\lambda^\white$ (resp.~$\lambda^\black$) will be called a  \emph{rightward (resp.~leftward) one-box-bump}.  From (\ref{rectangle}) it is apparent that performing a one-box-bump to an almost $(m|n)$-bipartition will yield another almost $(m|n)$-cross bipartition.  Moreover, it is easy to see that we can get from any given almost $(m|n)$-cross bipartition to any other almost $(m|n)$-cross bipartition by performing a finite sequence of of one-box-bumps.  For example, the bipartitions  $\lambda=((3,3,1,0,\ldots),(2,0,\ldots))$ and $\mu=((2,2,2,0,\ldots),(1,1,1,0,\ldots))$ are both almost $(2|2)$-cross, and we can get from $\lambda$ to $\mu$ using 2 rightward one-box-bumps followed by an leftward one-box-bump as illustrated using the rectangle description of almost $(m|n)$-cross bipartitions below:
$$\includegraphics{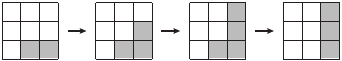}$$
Finally, although it is unnecessary for the rest of this paper, we note that a one-box-bump on an almost $(m|n)$-cross bipartition modifies its weight diagram by swapping a $\cross$ with an adjacent $\bigo$.

\section{Decomposing tensor products in  $\uRep(GL_\delta)$}\label{decompo}

As in \S\ref{uniprop},  given $\delta\in\K$ we let $\uRep(GL_\delta)=\uRep(GL_\delta; \K)$ denote  the $\K$-linear, additive, Karoubian,  symmetric, monoidal category with parameter $\delta$ defined in \cite{Del96} and \cite[\S10]{Del07}\footnote{Deligne uses the notation $\Rep(GL(\delta),\K)$ for $\uRep(GL_\delta)$.  We will follow the notation in \cite{CW}.} (see also \cite[\S3]{CW}).  
In this section we will prove some technical results on decomposing tensor product in $\uRep(GL_\delta)$.  As we will see, while studying $\uRep(GL_\delta)$ it is sometimes useful to work with a ``generic" parameter.  To do so we let $t$ denote an indeterminate and write $\K(t)$ for the field of rational functions in $t$.  Then $\uRep(GL_t)=\uRep(GL_t; \K(t))$ is a $\K(t)$-linear symmetric monoidal category with parameter $t$.  We begin with a brief account of the classification of indecomposable objects in $\uRep(GL_\delta)$ found in \cite{CW}.

\subsection{Indecomposable objects}\label{inde}  Given a bipartition $\lambda=(\lambda^\black,\lambda^\white)$, in \cite[\S4]{CW} an indecomposable object $L(\lambda)=L(\lambda^\black,\lambda^\white)$  in $\uRep(GL_\delta)$ is defined (up to isomorphism).  For example, $L\biemp$ denotes the unit object {\bf 1}, $L(\Box,\emp)$ is the ``natural" object of dimension $\delta$ mentioned in \S\ref{uniprop}, and $L(\emp,\Box)$ is the dual of $L(\Box,\emp)$.  We will not use an explicit definition of $L(\lambda)$ in this paper, so we refer the interested reader to \emph{loc.~cit.} for more details on $L(\lambda)$.  However, the $L(\lambda)$'s have many useful properties which we will exploit.  For instance, the assignment 
%In \cite[\S4]{CW} an indecomposable object $L(\lambda)=L(\lambda^\black,\lambda^\white)$  is defined (up to isomorphism) for any given bipartition $\lambda=(\lambda^\black,\lambda^\white)$.  In particular, $L\biemp$ denotes the unit object {\bf 1}, $L(\Box,\emp)$ is the ``natural object" of dimension $\delta$ mentioned in \S\ref{uniprop}, and $L(\emp,\Box)$ is the dual of $L(\Box,\emp)$.  We will not use an explicit definition of $L(\lambda)$ in this paper, so we refer the interested reader to \emph{loc.~cit.} for more details on $L(\lambda)$.  However, the $L(\lambda)$'s have many useful properties which we will exploit.  For instance, the assignment 
\begin{equation}\label{indclass}\begin{array}{rcl}
\left\{\begin{tabular}{c}bipartitions of\\
arbitrary size\end{tabular}\right\} & \arup{\text{bij.}} & \left\{\begin{tabular}{c}nonzero indecomposable objects in\\
 $\uRep(GL_\delta)$, up to isomorphism\end{tabular}\right\}\\[6pt]
 \lambda & \mapsto & L(\lambda)
 \end{array}
 \end{equation} is a bijection 
 \cite[Theorem 4.6.2]{CW} which we will use implicitly throughout the rest of this paper.  
 It is important to note that while the object $L(\lambda)$ depends on the parameter $\delta$, the fact that (\ref{indclass}) is a bijection does not.  Hence indecomposable objects in $\uRep(GL_\delta)$ are parametrized by bipartitions regardless of the parameter $\delta$.  Our notation $L(\lambda)$ for the indecomposable object labelled by $\lambda$ is perhaps poor since it does not keep track of the parameter $\delta$.  In particular, we will  write $L(\lambda)$ to denote both an object in $\uRep(GL_\delta)$ and one in $\uRep(GL_t)$.   Consequently, we must indicate which category we are working in whenever we write about $L(\lambda)$.

\subsection{Decomposition formulae}\label{decform}

The following decomposition formulae for tensor products in $\uRep(GL_t)$ are found in \cite[Corollary 7.1.2]{CW}:
\begin{equation}\label{leftbox}
L(\Box,\emp)\otimes L(\lambda)=\bigoplus_{\mu\in\Add^\black(\lambda)}L(\mu)\oplus\bigoplus_{\nu\in\Rem^\white(\lambda)}L(\nu),
\end{equation}
\begin{equation}\label{rightbox}
L(\emp,\Box)\otimes L(\lambda)=\bigoplus_{\mu\in\Add^\white(\lambda)}L(\mu)\oplus\bigoplus_{\nu\in\Rem^\black(\lambda)}L(\nu).
\end{equation}

It is important to note that (\ref{leftbox}) and (\ref{rightbox}) hold in $\uRep(GL_t)$ rather than in $\uRep(GL_\delta)$.  It turns out that the decomposition of tensor product in the latter category depends on the numbers $D_{\lambda,\mu}(\delta)$ \cite[\S7.2]{CW}.  To make this dependency precise, given a bipartition $\lambda$ and $\delta\in\K$, write $M_\delta(\lambda)=\bigoplus_\mu L(\mu)^{\oplus D_{\lambda,\mu}(\delta)}$ in $\uRep(GL_t)$.  As an immediate consequence of Proposition \ref{Dtriang} we can write \begin{equation}\label{Mdec}M_\delta(\lambda)=L(\lambda)\oplus \bigoplus_{\substack{\mu\subsetneqq\lambda,\\|\mu|\leq|\lambda|-2}} L(\mu)^{\oplus D_{\lambda,\mu}(\delta)}.\end{equation}
The objects $M_\delta(\lambda)$ allow us to compare tensor product decompositions in $\uRep(GL_\delta)$ with those in $\uRep(GL_t)$ via the following theorem:

\begin{theorem}\label{lifting}  $L(\lambda)\otimes L(\mu)=L(\nu^{(1)})\oplus\cdots\oplus L(\nu^{(k)})$ in $\uRep(GL_\delta)$ if and only if $M_\delta(\lambda)\otimes M_\delta(\mu)=M_\delta(\nu^{(1)})\oplus\cdots\oplus M_\delta(\nu^{(k)})$ in $\uRep(GL_t)$.  
\end{theorem}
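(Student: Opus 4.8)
Here is the plan.

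The idea is to pass between the two categories using a base-change functor. One expects that $\uRep(GL_\delta;\K)$ is obtained from $\uRep(GL_t;\K(t))$ by a process of specialization $t\mapsto\delta$: concretely, there should be a $\K$-linear symmetric monoidal functor from an integral form $\uRep(GL_t;\K[t])$ (walled Brauer diagrams with coefficients in $\K[t]$, idempotent-completed) to $\uRep(GL_\delta;\K)$, sending the natural object to the natural object and specializing $t$ to $\delta$. On objects this functor is a bijection on isomorphism classes of indecomposables (both sets are indexed by bipartitions, by \eqref{indclass}), but it is not monoidal-equivalence-compatible with the $L(\lambda)$ labelling, and that failure is precisely measured by the $D_{\lambda,\mu}(\delta)$: the claim to establish is that this specialization functor sends the (integral lift of the) object $L(\lambda)$ in characteristic-$t$ to $M_\delta(\lambda)$ in $\uRep(GL_\delta;\K)$. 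This should be extracted directly from \cite[\S7.2]{CW}, where the numbers $D_{\lambda,\mu}(\delta)$ are introduced precisely to record how indecomposables break up upon specialization; in fact the definition $M_\delta(\lambda)=\bigoplus_\mu L(\mu)^{\oplus D_{\lambda,\mu}(\delta)}$ together with \eqref{Mdec} is engineered so that "$L(\lambda)$ specializes to $M_\delta(\lambda)$" holds by construction. I would state this as a lemma citing \emph{loc.~cit.}

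Granting that, the theorem is a formal consequence. Fix bipartitions $\lambda,\mu$. In $\uRep(GL_\delta;\K)$ we have a decomposition $L(\lambda)\otimes L(\mu)=\bigoplus_i L(\nu^{(i)})^{\oplus a_i}$ into indecomposables, uniquely determined by Krull--Schmidt. Apply the specialization lemma: lift each side to $\uRep(GL_t;\K(t))$. The left side lifts to $M_\delta(\lambda)\otimes M_\delta(\mu)$ (the functor is monoidal, so it takes the tensor product of the lifts to the lift of the tensor product), and the right side lifts to $\bigoplus_i M_\delta(\nu^{(i)})^{\oplus a_i}$. Hence one direction: the $\uRep(GL_\delta)$-decomposition gives the $\uRep(GL_t)$-identity for $M_\delta$. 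For the converse, suppose $M_\delta(\lambda)\otimes M_\delta(\mu)=\bigoplus_i M_\delta(\nu^{(i)})^{\oplus a_i}$ in $\uRep(GL_t;\K(t))$; I want to run Krull--Schmidt in $\uRep(GL_t;\K(t))$ on both sides to conclude the multiplicities of each $L(\kappa)$ agree, and then deduce the multiplicities in $L(\lambda)\otimes L(\mu)$ over $\uRep(GL_\delta)$.

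The mechanism for the converse is the triangularity of \eqref{Mdec}. Because $M_\delta(\kappa)=L(\kappa)\oplus\bigoplus_{|\eta|\le|\kappa|-2}L(\eta)^{\oplus D_{\kappa,\eta}(\delta)}$ with all correction terms of strictly smaller size, the transition matrix $(D_{\kappa,\eta}(\delta))$ between the "bases" $\{M_\delta(\kappa)\}$ and $\{L(\kappa)\}$ of the (split) Grothendieck group of $\uRep(GL_t;\K(t))$ is unitriangular with respect to the size filtration (Proposition \ref{Dtriang}), hence invertible over $\Z$. Therefore an identity among the $M_\delta$'s in the Grothendieck group is equivalent to the corresponding identity among the $L$'s; combined with Krull--Schmidt in $\uRep(GL_t;\K(t))$ (so that Grothendieck-group identities of direct sums of indecomposables are genuine isomorphisms), we get that $M_\delta(\lambda)\otimes M_\delta(\mu)=\bigoplus_i M_\delta(\nu^{(i)})^{\oplus a_i}$ in $\uRep(GL_t)$ forces a unique multiset $\{\rho^{(j)}\}$ with $L(\lambda)\otimes L(\mu)=\bigoplus_j L(\rho^{(j)})$ in $\uRep(GL_t)$ and $\{\rho^{(j)}\}=\{\nu^{(i)}\}$ as multisets. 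Finally apply the specialization lemma again to transport this back: since $L(\lambda)\otimes L(\mu)$ in $\uRep(GL_\delta)$ lifts to $M_\delta(\lambda)\otimes M_\delta(\mu)$, comparing with the first direction pins down the indecomposable decomposition in $\uRep(GL_\delta)$ as $\bigoplus_i L(\nu^{(i)})^{\oplus a_i}$.

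The main obstacle is the first step: setting up the specialization functor cleanly and verifying that it carries (an integral lift of) $L(\lambda)$ to $M_\delta(\lambda)$. This is really a bookkeeping result about idempotents in the walled Brauer algebra and how a primitive idempotent over $\K(t)$ decomposes after base change to $\K$ at $t=\delta$; I expect it is exactly \cite[\S7.2]{CW} and would simply cite it rather than reprove it. Everything after that is Krull--Schmidt plus the unitriangularity of \eqref{Mdec}, which is routine.
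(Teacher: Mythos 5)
Your proposal reconstructs precisely the machinery behind the paper's one-line citation of \cite[Theorem 6.2.3(1) and Corollary 6.4.2]{CW}: set up a lifting operation between $\uRep(GL_\delta)$ and $\uRep(GL_t)$, show it is compatible with $\otimes$, additive, and injective on isomorphism classes, identify the lift of $L(\lambda)$ with $M_\delta(\lambda)$, and finish by Krull--Schmidt together with the unitriangularity of the transition matrix $(D_{\lambda,\mu}(\delta))$ coming from Proposition~\ref{Dtriang}. So the route is essentially the same as the paper's; you have simply opened the black box that the paper leaves closed.

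One imprecision in your setup is worth flagging. The relevant map goes from $\uRep(GL_\delta;\K)$ \emph{to} $\uRep(GL_t;\K(t))$ --- CW's $\lift$, obtained by lifting idempotents along the local ring $\K[t]_{(t-\delta)}$ --- and the key identity (CW, Corollary~6.4.2) is $\lift(L(\lambda))\cong M_\delta(\lambda)$ as objects of $\uRep(GL_t)$. In your first paragraph you instead assert that a \emph{specialization} functor (going the opposite direction) carries $L(\lambda)$ to ``$M_\delta(\lambda)$ in $\uRep(GL_\delta;\K)$''; but $M_\delta(\lambda)$ is by definition an object of $\uRep(GL_t)$, and since $\operatorname{specialize}\circ\lift$ is the identity on isomorphism classes, the decomposition matrix of specialization is the \emph{inverse} of $D$, not $D$ itself. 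Your second paragraph uses the correct lift formulation, so the argument does go through, but the key lemma should be stated in the lift direction to avoid the clash between the two paragraphs.
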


\begin{proof} See \cite[Theorem 6.2.3(1) and Corollary 6.4.2]{CW}.
\end{proof}

Theorem \ref{lifting} along with formulae (\ref{leftbox}), (\ref{rightbox}), and (\ref{Mdec}) will serve as our main tools for proving the remaining results in \S\ref{decompo}.  Indeed, the proof of the following useful proposition will make use of each of these tools.

\begin{proposition}\label{Reductionpropn} Suppose $\lambda$ and $\mu$ are bipartitions and $\delta\in\K$.  If $\mu\subset\lambda$ then $L(\lambda)$ is a summand of $A\otimes L(\mu)$ for some object $A$ in $\uRep(GL_\delta)$.
\end{proposition}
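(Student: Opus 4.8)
The plan is to induct on $|\lambda|-|\mu|$ and reduce to the case where $\lambda$ is obtained from $\mu$ by adding a single box, using the decomposition formulae (\ref{leftbox}) and (\ref{rightbox}) together with the lifting theorem (Theorem \ref{lifting}).

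First I would note that it suffices to treat the case $\lambda \in \Add^\black(\mu)$ or $\lambda \in \Add^\white(\mu)$: if $\mu \subset \lambda$ and $|\lambda| - |\mu| = k \geq 2$, one can choose a chain $\mu = \rho^{(0)} \subset \rho^{(1)} \subset \cdots \subset \rho^{(k)} = \lambda$ with each $\rho^{(i)}$ obtained from $\rho^{(i-1)}$ by adding one box (to either component). If each single-box step is handled, then iterating gives $L(\lambda)$ as a summand of $A_k \otimes \cdots \otimes A_1 \otimes L(\mu)$ for suitable objects $A_i$, and we take $A = A_k \otimes \cdots \otimes A_1$; here one uses that if $L(\rho)$ is a summand of $B \otimes L(\sigma)$ and $L(\sigma)$ is a summand of $C \otimes L(\mu)$, then $L(\rho)$ is a summand of $B \otimes C \otimes L(\mu)$, which is immediate from associativity of $\otimes$ and additivity.

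So fix the single-box case, say $\lambda \in \Add^\black(\mu)$ (the $\Add^\white$ case being symmetric, using (\ref{rightbox}) in place of (\ref{leftbox})). Working first in $\uRep(GL_t)$, formula (\ref{leftbox}) gives $L(\Box,\emp)\otimes L(\mu) = \bigoplus_{\rho \in \Add^\black(\mu)} L(\rho) \oplus \bigoplus_{\rho \in \Rem^\white(\mu)} L(\rho)$, so $L(\lambda)$ appears with multiplicity one on the right. To pass to $\uRep(GL_\delta)$, I would instead compute $M_\delta(\Box,\emp)\otimes M_\delta(\mu)$ in $\uRep(GL_t)$. By (\ref{Mdec}), $M_\delta(\Box,\emp) = L(\Box,\emp)$ (since $(\Box,\emp)$ has size $1$, there is no smaller bipartition contained in it), and $M_\delta(\mu) = L(\mu) \oplus \bigoplus_{\sigma \subsetneq \mu,\, |\sigma|\le|\mu|-2} L(\sigma)^{\oplus D_{\mu,\sigma}(\delta)}$. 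Applying (\ref{leftbox}) to each summand, $M_\delta(\Box,\emp)\otimes M_\delta(\mu)$ is a direct sum of various $L(\rho)$'s, and the term $L(\lambda)$ arising from $L(\Box,\emp)\otimes L(\mu)$ is present. Now I group this direct sum of $L(\rho)$'s into blocks of the form $M_\delta(\nu)$: using the triangularity of (\ref{Mdec}) (the partial order by containment and size), one can successively peel off $M_\delta(\nu^{(1)}), M_\delta(\nu^{(2)}), \ldots$ starting from the maximal $\nu$'s present, and since $\lambda$ is maximal among the bipartitions appearing (it has the largest size, $|\mu|+1$, among all $\rho$ that occur — the other contributions come from $\Rem^\white$ of $\mu$ or from smaller $\sigma$'s), $M_\delta(\lambda)$ will be one of the blocks peeled off. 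Thus $M_\delta(\Box,\emp)\otimes M_\delta(\mu) = M_\delta(\lambda) \oplus M_\delta(\nu^{(2)}) \oplus \cdots$ in $\uRep(GL_t)$, and Theorem \ref{lifting} (in the form: $L(\lambda)$ is a summand of $L(\mu')\otimes L(\mu'')$ in $\uRep(GL_\delta)$ iff $M_\delta(\lambda)$ is a summand of $M_\delta(\mu')\otimes M_\delta(\mu'')$ in $\uRep(GL_t)$) then yields that $L(\lambda)$ is a summand of $L(\Box,\emp)\otimes L(\mu)$ in $\uRep(GL_\delta)$, so we may take $A = L(\Box,\emp)$.

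**The main obstacle** I anticipate is the regrouping step: verifying that the direct sum of $L(\rho)$'s obtained from expanding $M_\delta(\Box,\emp)\otimes M_\delta(\mu)$ via (\ref{leftbox}) really does decompose as a direct sum of $M_\delta(\nu)$'s, with $M_\delta(\lambda)$ among them. This is essentially a consequence of the unitriangularity of the change of basis between the $\{L(\nu)\}$ and $\{M_\delta(\nu)\}$ (guaranteed by (\ref{Mdec}) and Proposition \ref{Dtriang}) together with the fact that Theorem \ref{lifting} asserts that tensor products of $M_\delta$'s in $\uRep(GL_t)$ are always sums of $M_\delta$'s — but care is needed to extract the single summand $M_\delta(\lambda)$ cleanly, and an induction on size (or an appeal to the maximality of $\lambda$ together with $D_{\lambda,\lambda}(\delta)=1$) is the cleanest way to make this rigorous. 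If one wants to avoid this bookkeeping entirely, an alternative is to invoke Theorem \ref{lifting} directly together with the observation that $L(\lambda)$ does occur in the $\uRep(GL_t)$-decomposition of $L(\Box,\emp)\otimes L(\mu)$ with nonzero multiplicity, and that the only way $L(\lambda)$ (being of maximal size) can fail to occur in the $\uRep(GL_\delta)$-decomposition is ruled out because it would force a mismatch in the top-size layer of the two sides of the equivalence in Theorem \ref{lifting}.
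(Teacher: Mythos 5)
Your proposal is correct and follows essentially the same route as the paper: reduce to the one-box case by iterating, apply (\ref{leftbox})/(\ref{rightbox}) in $\uRep(GL_t)$, observe that $\lambda$ has maximal size among bipartitions occurring in $M_\delta(\Box,\emp)\otimes M_\delta(\mu)$ (resp.\ $M_\delta(\emp,\Box)\otimes M_\delta(\mu)$), and use the triangularity of (\ref{Mdec}) together with Theorem \ref{lifting} to conclude. The alternative maximality argument you sketch at the end of your ``obstacle'' discussion is in fact exactly how the paper phrases it, and it sidesteps the regrouping bookkeeping cleanly.
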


\begin{proof} If $\mu\subset\nu\subset\lambda$, $L(\mu)$ is a summand of $A\otimes L(\nu)$, and $L(\nu)$ is a summand of $B\otimes L(\lambda)$; then $L(\mu)$ is a summand of $A\otimes B\otimes L(\lambda)$.  Hence, it suffices to consider the case $\lambda\in\Add^\black(\mu)\cup\Add^\white(\mu)$.  In this case, $\lambda$ is a bipartition of maximal size such that $L(\lambda)$ is a summand of either $L(\Box,\emp)\otimes L(\mu)$ or $L(\emp,\Box)\otimes L(\mu)$ in $\uRep(GL_t)$ (formulae (\ref{leftbox}) and (\ref{rightbox})).  Hence $\lambda$ is a maximally sized bipartition such that $L(\lambda)$ is a summand of either $M_\delta(\Box,\emp)\otimes M_\delta(\mu)$ or $M_\delta(\emp,\Box)\otimes M_\delta(\mu)$ in $\uRep(GL_t)$ (formulae (\ref{leftbox}), (\ref{rightbox}), and  (\ref{Mdec})).  It follows that $L(\lambda)$ is a summand of either $L(\Box,\emp)\otimes L(\mu)$ or $L(\emp,\Box)\otimes L(\mu)$ in $\uRep(GL_\delta)$ (Theorem  \ref{lifting} and formula (\ref{Mdec})).
\end{proof}

\subsection{Tensor products and almost $(m|n)$-cross bipartitions}  In this subsection we combine the combinatorial properties of almost $(m|n)$-cross bipartitions (\S\ref{almost}-\ref{OBB}) with our decomposition formulae for tensor products in $\uRep(GL_{m-n})$ (\S\ref{decform}) to derive some technical results on tensor products of indecomposable objects labelled by almost $(m|n)$-cross bipartitions.  These results will be used in \S\ref{mainresult} to prove our classification of ideals.  

\begin{proposition}\label{lowerlimit} Fix a bipartion $\lambda\not=\biemp$.  Given another bipartition $\mu$, let $a_\mu$ (resp.~$b_\mu$) denote the number of times $L(\mu)$ occurs in a decomposition of $L(\Box,\emp)\otimes L(\lambda)$ (resp.~$L(\emp,\Box)\otimes L(\lambda)$) into indecomposable summands in the category $\uRep(GL_\delta)$.  If $a_\mu=b_\mu=0$ whenever $\mu\subsetneqq\lambda$, then $\lambda$ is almost $(m|n)$-cross for some $m,n\in\Z_{\geq0}$ with $m-n=\delta$.  
\end{proposition}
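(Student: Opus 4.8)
The plan is to translate the hypothesis $a_\mu = b_\mu = 0$ for all $\mu \subsetneqq \lambda$ into combinatorial statements about the weight diagram $x_\lambda(\delta)$, and then recognize that the resulting constraints force $\delta \in \Z$ and $x_\lambda(\delta)$ to have the shape described in Proposition \ref{alwt}, so that $\lambda$ is almost $(m|n)$-cross with $m - n = \delta$. The first step is to pass from $\uRep(GL_\delta)$ to $\uRep(GL_t)$: by Theorem \ref{lifting} together with formula (\ref{Mdec}), $L(\mu)$ fails to appear in $L(\Box,\emp)\otimes L(\lambda)$ in $\uRep(GL_\delta)$ precisely when the multiplicities in the $\uRep(GL_t)$-decomposition of $M_\delta(\Box,\emp)\otimes M_\delta(\lambda)$ cancel appropriately. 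Since $M_\delta(\Box,\emp) = L(\Box,\emp)$ (its size is too small for any proper $D$-correction by Proposition \ref{Dtriang}), one has $M_\delta(\Box,\emp)\otimes M_\delta(\lambda) = \bigoplus_\kappa L(\Box,\emp)\otimes L(\kappa)^{\oplus D_{\lambda,\kappa}(\delta)}$, and expanding via (\ref{leftbox}) this is a sum of $L(\rho)$'s over $\rho \in \Add^\black(\kappa) \cup \Rem^\white(\kappa)$. On the other hand $\bigoplus_\mu M_\delta(\mu)^{\oplus a_\mu}$ must match this. Focusing on the summands $L(\mu)$ with $\mu \subsetneqq \lambda$: since $a_\mu = 0$, every such $L(\mu)$ that arises from $\Rem^\white$ or $\Add^\black$ applied to some $\kappa$ with $D_{\lambda,\kappa}(\delta)=1$ (most importantly $\kappa = \lambda$ itself, giving the $\mu \in \Rem^\white(\lambda)$) must be cancelled — and the only available source of cancellation of $L(\mu)$'s with $\mu$ of size $|\lambda|-2$ or smaller is the $D_{\lambda,\kappa}(\delta)$-terms, i.e. the $M_\delta$ expansions on both sides.

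The key extraction, which I would carry out carefully, is this: from $a_\mu = 0$ for all $\mu \subsetneqq \lambda$, deduce that for every $\mu \in \Rem^\white(\lambda)$ there is a $\nu \in \Add^\black(\lambda)$ with $D_{\nu,\mu}(\delta) = 1$ (the $L(\mu)$ produced by removing a white box from $\lambda$ must be cancelled by an $M_\delta$-correction coming from some $L(\nu)$ with $\nu$ a genuine summand, and $\nu$ of size $|\lambda|+1$ forces $\nu \in \Add^\black(\lambda)$). Symmetrically, from $b_\mu = 0$, for every $\mu \in \Rem^\black(\lambda)$ there is $\nu \in \Add^\white(\lambda)$ with $D_{\nu,\mu}(\delta)=1$. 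These are exactly the hypotheses of Proposition \ref{addrem}(i) and (ii), which then tell us that $x_\lambda(\delta)$ has no adjacent labels of the form $\down\up$, $\bigo\up$, $\down\cross$, $\down\bigo$, or $\cross\up$. I then argue this pins down the shape: no $\down\up$ and no $\down\cross$ and no $\down\bigo$ means a $\down$ can have nothing to its right except possibly more $\down$'s, which (since $x_\lambda(\delta)$ stabilizes to all $\down$ far right and all $\up$ far left, compare (\ref{emptywt})) forces all $\down$'s to lie strictly to the right of all other nontrivial labels; likewise no $\bigo\up$ and no $\cross\up$ forces every $\up$ to lie strictly to the left of every $\bigo$ and $\cross$. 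Combined, reading left to right the diagram is: a tail of $\up$'s, then a block of $\cross$'s and $\bigo$'s in some order, then a tail of $\down$'s — and in fact the "no $\down$ immediately after $\cross$ or $\bigo$" plus "$\up$ before $\cross/\bigo$" forces the middle block, and Proposition \ref{alwt} recognizes such a diagram. One also invokes Corollary \ref{xminuso}: $\delta$ must be the number of $\cross$'s minus $\bigo$'s, which is an integer, so $\delta \in \Z$; write $\delta = m - n$ with $m,n \geq 0$ by choosing the block appropriately, and then Proposition \ref{alwt} gives that $\lambda$ is almost $(m|n)$-cross.

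The main obstacle is the bookkeeping in the first part: carefully isolating, among all the $L(\mu)$'s appearing in $M_\delta(\Box,\emp)\otimes M_\delta(\lambda)$ expanded by (\ref{leftbox}), exactly those with $\mu \in \Rem^\white(\lambda)$ and showing their multiplicity-$1$ contribution from $\kappa = \lambda$ cannot be cancelled except by a $D_{\nu,\mu}(\delta)$-term with $\nu \in \Add^\black(\lambda)$ an actual summand. This requires controlling sizes (using Proposition \ref{Dtriang} that $D$-corrections drop size by at least $2$, so the size-$(|\lambda|-1)$ object $L(\mu)$ for $\mu \in \Rem^\white(\lambda)$ can only be hit on the "correction" side by the expansion of $M_\delta(\nu)$ for $\nu$ of size $|\lambda|+1$, forcing $\nu \in \Add^\black(\lambda)$) and ruling out that such $\mu$ also appears with positive multiplicity from the left-hand expansion in a way that would require $a_\mu > 0$. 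Once that matching argument is pinned down, everything else is a direct appeal to the propositions already proved in \S\ref{bwt}.
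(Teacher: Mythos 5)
Your proposal follows the same route as the paper's proof: pass to $\uRep(GL_t)$ via Theorem \ref{lifting} and formula (\ref{Mdec}), use the size bounds from Proposition \ref{Dtriang} together with (\ref{leftbox}), (\ref{rightbox}) to show that $a_\mu=0$ (resp.\ $b_\mu=0$) for $\mu\subsetneqq\lambda$ forces, for each $\mu\in\Rem^\white(\lambda)$ (resp.\ $\Rem^\black(\lambda)$), the existence of $\nu\in\Add^\black(\lambda)$ (resp.\ $\Add^\white(\lambda)$) with $D_{\nu,\mu}(\delta)=1$, then feed these into Proposition \ref{addrem} to pin down the shape of $x_\lambda(\delta)$ and recognize it via Proposition \ref{alwt}. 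The ``bookkeeping'' step you flag as the main obstacle is handled in the paper exactly as you sketch it, and the rest is a direct appeal to the combinatorial propositions.

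Two small points need tightening. First, the way you deduce $\delta\in\Z$ is circular: Corollary \ref{xminuso} has $\delta\in\Z$ as a hypothesis, so it cannot be what tells you $\delta$ is an integer. The integrality of $\delta$ instead falls out of the key extraction itself: once you have $D_{\nu,\mu}(\delta)=1$ with $\nu\neq\mu$ (and $\nu\neq\mu$ is automatic here since $|\nu|=|\lambda|+1$ while $|\mu|=|\lambda|-1$), Remark \ref{Dint} forces $\delta\in\Z$; \emph{then} Corollary \ref{xminuso} is available. Second, ``write $\delta=m-n$ with $m,n\geq0$ by choosing the block appropriately'' glosses over the one place $\lambda\neq\biemp$ is actually used: one sets $m+1$ equal to the number of $\cross$'s and $n+1$ equal to the number of $\bigo$'s in the middle block, and for $m,n\geq 0$ one needs the middle block to contain at least one $\cross$ and at least one $\bigo$. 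This follows because if, say, there were no $\bigo$'s, the pinned-down shape would make $I_\up(\lambda)$ downward-closed and $I_\down(\lambda,\delta)$ upward-closed, forcing $\lambda^\black=\lambda^\white=\emp$, contradicting $\lambda\neq\biemp$. With these two repairs your argument coincides with the paper's.
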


\begin{proof}  Write $L(\Box,\emp)\otimes L(\lambda)=L(\nu^{(1)})\oplus\cdots\oplus L(\nu^{(k)})$ in $\uRep(GL_\delta)$ and assume $\nu^{(i)}$ is not strictly contained in $\lambda$ for all $i=1,\ldots,k$.  Then, by Theorem  \ref{lifting}, $M_\delta(\Box,\emp)\otimes M_\delta(\lambda)=M_\delta(\nu^{(1)})\oplus\cdots\oplus M_\delta(\nu^{(k)})$ in $\uRep(GL_t)$. Hence, by formulae (\ref{Mdec}) and  (\ref{leftbox}), \begin{equation}\label{nubound}|\nu^{(i)}|\leq|\lambda|+1\end{equation} for each $i=1,\ldots,k$.  Moreover, equality in (\ref{nubound}) occurs if and only if $\nu^{(i)}\in\Add^\black(\lambda)$.

Now, given $\mu\in\Rem^\white(\lambda)$, $L(\mu)$ is a summand of $L(\Box,\emp)\otimes L(\lambda)$ in $\uRep(GL_t)$ (formula (\ref{leftbox})), and is thus a summand of $M_\delta(\Box,\emp)\otimes M_\delta(\lambda)$ in $\uRep(GL_t)$ (formula (\ref{Mdec})).  Hence, $D_{\nu^{(j)},\mu}(\delta)\not=0$ for some $j=1,\ldots,k$. However, $a_\mu=0$ implies $\mu\not=\nu^{(j)}$, which implies $\delta\in\Z$ (Remark \ref{Dint}).  Moreover, $|\nu^{(j)}|\geq|\mu|+2=|\lambda|+1$ (Proposition \ref{Dtriang}), which implies $\nu^{(j)}\in\Add^\black(\lambda)$.  In particular, we have shown that for every $\mu\in\Rem^\white(\lambda)$ there exists $\nu\in\Add^\black(\lambda)$ with $D_{\nu,\mu}(\delta)\not=0$.  Similarly, assuming $b_\mu=0$ for all $\mu\subsetneqq\lambda$, one can show that for every $\mu\in\Rem^\black(\lambda)$ there exists $\nu\in\Add^\white(\lambda)$ with $D_{\nu,\mu}(\delta)\not=0$.  It follows from Proposition \ref{addrem} that the label to the left (resp.~right) of any $\up$ (resp.~$\down$) in $x_\lambda(\delta)$ is also a $\up$ (resp.~$\down$).  Thus, $x_\lambda(\delta)$ must be of the form $$\includegraphics{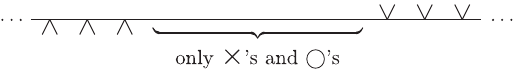}$$  Let $m\in\Z$ be such that the number of $\cross$'s in $x_\lambda(\delta)$ is $m+1$.  Note that $m\geq0$ since $\lambda\not=\biemp$.  Moreover, there are $n+1$ $\bigo$'s in $x_\lambda(\delta)$ where $n:=m-\delta$ (Corollary \ref{xminuso}).  It follows from Proposition \ref{alwt} that $\lambda$ is almost $(m|n)$-cross.
\end{proof}

\begin{corollary}\label{toalmost} Suppose $\lambda$ is a bipartition and $\delta\in\K$.  Either

(i) ${\bf 1}$ is a summand of $A\otimes L(\lambda)$ for some object $A$ in $\uRep(GL_\delta)$; or 

(ii) $\delta\in\Z$ and there exists a nonnegative integer $m$ and an almost $(m|m-\delta)$-cross bipartition $\mu\subset\lambda$ such that $L(\mu)$ is a summand of $A\otimes L(\lambda)$ for some object $A$ in $\uRep(GL_\delta)$.
\end{corollary}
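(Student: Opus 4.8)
The plan is to induct downward on the size $|\lambda|$, peeling off a box at a time until we either reach $\biemp$ (giving case (i) directly, since $L\biemp = {\bf 1}$) or get stuck at a bipartition whose combinatorics forces it to be almost $(m|n)$-cross. The key observation is that $\biemp \subset \lambda$ always holds, so by Proposition \ref{Reductionpropn} the unit ${\bf 1} = L\biemp$ is a summand of $A \otimes L(\lambda)$ for some $A$ whenever we can ``walk down'' from $\lambda$ to $\biemp$ through summands of tensor products with $L(\Box,\emp)$ and $L(\emp,\Box)$.

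First I would set up the induction. If $\lambda = \biemp$, take $A = {\bf 1}$ and we are in case (i). If $\lambda \neq \biemp$, consider the two tensor products $L(\Box,\emp)\otimes L(\lambda)$ and $L(\emp,\Box)\otimes L(\lambda)$ in $\uRep(GL_\delta)$, and examine (in the notation of Proposition \ref{lowerlimit}) whether the multiplicities $a_\mu, b_\mu$ vanish for all $\mu \subsetneqq \lambda$. If they do \emph{not} all vanish, then some $\mu \subsetneqq \lambda$ occurs as a summand of, say, $L(\Box,\emp)\otimes L(\lambda)$; by induction either ${\bf 1}$ is a summand of $B \otimes L(\mu)$ for some $B$ (and then, composing, ${\bf 1}$ is a summand of $B \otimes L(\Box,\emp) \otimes L(\lambda)$, giving case (i) for $\lambda$), or $\delta \in \Z$ and there is an almost $(m|m-\delta)$-cross $\nu \subset \mu \subset \lambda$ with $L(\nu)$ a summand of $B \otimes L(\mu)$ for some $B$ (and then $L(\nu)$ is a summand of $B \otimes L(\Box,\emp) \otimes L(\lambda)$, giving case (ii) for $\lambda$). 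The composition/transitivity argument here is the same one used at the start of the proof of Proposition \ref{Reductionpropn}: a summand of a summand of $A\otimes(\cdot)$ is again a summand of $A'\otimes(\cdot)$.

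In the remaining case, $a_\mu = b_\mu = 0$ for all $\mu \subsetneqq \lambda$, and $\lambda \neq \biemp$, so Proposition \ref{lowerlimit} applies directly: $\lambda$ itself is almost $(m|n)$-cross for some $m,n \geq 0$ with $m - n = \delta$ (in particular $\delta \in \Z$). Then case (ii) holds with $\mu = \lambda$ and $A = {\bf 1}$, since $\lambda \subset \lambda$ and $L(\lambda)$ is trivially a summand of ${\bf 1}\otimes L(\lambda)$. This exhausts all cases, completing the induction.

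The main subtlety to get right is the bookkeeping in the inductive step: one must ensure that in case (ii) the witness bipartition $\nu$ produced really is \emph{contained in $\lambda$}, not merely contained in the intermediate $\mu$ — this is immediate from transitivity of $\subset$, but it is the one place where the two cases of the corollary interact and must be tracked carefully. There is also the minor point that the hypothesis of Proposition \ref{lowerlimit} is about multiplicities in $\uRep(GL_\delta)$ (not $\uRep(GL_t)$), which matches exactly what we have in hand, so no lifting argument is needed at this stage — that work has already been absorbed into the proof of Proposition \ref{lowerlimit}. I do not anticipate any genuinely hard step; the proof is essentially a clean descent argument that packages Propositions \ref{Reductionpropn} and \ref{lowerlimit} together.
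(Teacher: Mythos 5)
Your proof is correct and takes essentially the same approach as the paper: downward induction on $|\lambda|$, using Proposition \ref{lowerlimit} (or its contrapositive) to either descend to a strictly smaller $\nu$ and invoke the inductive hypothesis with transitivity, or conclude $\lambda$ itself is almost $(m|n)$-cross. The only cosmetic difference is that the paper organizes the case split by first peeling off the base cases ($\lambda = \biemp$ or $\lambda$ already almost $(m|m-\delta)$-cross) and then applying the contrapositive of Proposition \ref{lowerlimit}, whereas you split on whether the multiplicities $a_\mu, b_\mu$ all vanish; these are logically equivalent.
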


\begin{proof}  We induct on $|\lambda|$.  If $\lambda=\biemp$ or $\lambda$ is almost $(m|m-\delta)$-cross for some $m$, then take $A={\bf 1}$.  Otherwise, by Proposition \ref{lowerlimit} there exists a bipartition $\nu$ with $\nu\subsetneqq\lambda$ such that $L(\nu)$ is a summand of either $L(\Box,\emp)\otimes L(\lambda)$ or $L(\emp,\Box)\otimes L(\lambda)$ in $\uRep(GL_\delta)$.  By induction, either (i) ${\bf 1}$ is a summand of $A'\otimes L(\nu)$ and hence a summand of either $A'\otimes L(\Box,\emp)\otimes L(\lambda)$ or  $A'\otimes L(\emp,\Box)\otimes L(\lambda)$ for some object $A'$; or (ii) $\delta\in\Z$ and there exists a nonnegative integer $m$ and an almost $(m|m-\delta)$-cross bipartition $\mu\subset\lambda$ such that $L(\mu)$ is a summand of $A'\otimes L(\nu)$ and hence of either $A'\otimes L(\Box,\emp)\otimes L(\lambda)$ or  $A'\otimes L(\emp,\Box)\otimes L(\lambda)$ for some object $A'$.  
\end{proof}

\begin{lemma}\label{redgen} Suppose $\lambda$ and $\mu$ are almost  $(m|n)$-cross  bipartitions. 

(i)  $L(\mu)$ is a summand of $L(\Box,\emp)^{\otimes 2}\otimes L(\lambda)$  in  $\uRep(GL_{m-n})$ if and only if $L(\mu)$ is a summand of $L(\Box,\emp)^{\otimes 2}\otimes L(\lambda)$ in  $\uRep(GL_{t})$.

(ii) $L(\mu)$ is a summand of  $L(\emp,\Box)^{\otimes 2}\otimes L(\lambda)$ in $\uRep(GL_{m-n})$ if and only if $L(\mu)$ is a summand of $L(\emp,\Box)^{\otimes 2}\otimes L(\lambda)$ in  $\uRep(GL_{t})$.
\end{lemma}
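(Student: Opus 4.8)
The plan is to prove (i) and (ii) simultaneously by showing that $L(\mu)$ occurs with the \emph{same multiplicity} in $L(\Box,\emp)^{\otimes 2}\otimes L(\lambda)$ (resp.\ $L(\emp,\Box)^{\otimes 2}\otimes L(\lambda)$) whether it is computed in $\uRep(GL_{m-n})$ or in $\uRep(GL_t)$; the ``summand if and only if summand'' statement follows at once. The engine is Theorem~\ref{lifting}, and the feature that makes it usable here is a rigidity coming from the hypotheses: since $\lambda$ and $\mu$ are almost $(m|n)$-cross, Corollary~\ref{linkage}(i) gives $M_{m-n}(\lambda)=L(\lambda)$ and $M_{m-n}(\mu)=L(\mu)$, while $M_{m-n}(\Box,\emp)=L(\Box,\emp)$ (resp.\ $M_{m-n}(\emp,\Box)=L(\emp,\Box)$) automatically, these bipartitions having size $1$.

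Since Theorem~\ref{lifting} is phrased for a tensor product of two indecomposables, I first reduce the triple product to binary ones. One records that $L(\Box,\emp)^{\otimes 2}=L((2),\emp)\oplus L((1,1),\emp)$ in \emph{both} categories (and dually $L(\emp,\Box)^{\otimes 2}=L(\emp,(2))\oplus L(\emp,(1,1))$): in $\uRep(GL_t)$ this is immediate from (\ref{leftbox}), and for $\uRep(GL_{m-n})$ it follows from Theorem~\ref{lifting} once one knows $M_{m-n}((2),\emp)=L((2),\emp)$ and $M_{m-n}((1,1),\emp)=L((1,1),\emp)$. These last equalities hold because, by (\ref{Mdec}) and Proposition~\ref{Dtriang}, the only possible correction term is a multiple of $L\biemp$, and $D_{((2),\emp),\biemp}(\delta)=D_{((1,1),\emp),\biemp}(\delta)=0$ for every $\delta$: a linkage between either weight diagram and $x_\biemp(\delta)$ would have to move a $\down$-label, yet $(2)$, $(1,1)$ and $\emp$ have the same $\down$-labels (their second components being empty), so no admissible cap is available. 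Consequently $L(\Box,\emp)^{\otimes 2}\otimes L(\lambda)=\bigl(L((2),\emp)\otimes L(\lambda)\bigr)\oplus\bigl(L((1,1),\emp)\otimes L(\lambda)\bigr)$ is a sum of genuinely binary products, to each of which Theorem~\ref{lifting} applies (using also $M_{m-n}(\lambda)=L(\lambda)$).

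Now write $L(\Box,\emp)^{\otimes 2}\otimes L(\lambda)=\bigoplus_i L(\nu^{(i)})$ in $\uRep(GL_{m-n})$; applying Theorem~\ref{lifting} to the two binary pieces and summing gives $L(\Box,\emp)^{\otimes 2}\otimes L(\lambda)=\bigoplus_i M_{m-n}(\nu^{(i)})$ in $\uRep(GL_t)$. Two observations then finish the argument. First, iterating (\ref{leftbox}) shows that every indecomposable summand of $L(\Box,\emp)^{\otimes 2}\otimes L(\lambda)$ in $\uRep(GL_t)$ has size at most $|\lambda|+2$; as the maximal-size summand of $M_{m-n}(\nu^{(i)})$ is $L(\nu^{(i)})$ by (\ref{Mdec}), this forces $|\nu^{(i)}|\le|\lambda|+2$ for all $i$. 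Second, $\mu$ is almost $(m|n)$-cross with $|\mu|=|\lambda|$ (Proposition~\ref{alprop}(i)), so by Corollary~\ref{linkage}(ii) the relation $D_{\nu^{(i)},\mu}(m-n)=1$ forces $\nu^{(i)}=\mu$; hence no correction term $L(\xi)$ with $\xi\subsetneqq\nu^{(i)}$ appearing in $M_{m-n}(\nu^{(i)})$ via (\ref{Mdec}) can equal $L(\mu)$. Therefore the multiplicity of $L(\mu)$ in $\bigoplus_i M_{m-n}(\nu^{(i)})$ is exactly $\#\{i:\nu^{(i)}=\mu\}$, which is its multiplicity in $L(\Box,\emp)^{\otimes 2}\otimes L(\lambda)$ as computed in $\uRep(GL_{m-n})$; but $\bigoplus_i M_{m-n}(\nu^{(i)})$ \emph{is} $L(\Box,\emp)^{\otimes 2}\otimes L(\lambda)$ as computed in $\uRep(GL_t)$. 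This proves (i), and (ii) follows identically after replacing $L(\Box,\emp)$, $\Add^\black$, $\Rem^\white$, $((2),\emp)$, $((1,1),\emp)$ and (\ref{leftbox}) by $L(\emp,\Box)$, $\Add^\white$, $\Rem^\black$, $(\emp,(2))$, $(\emp,(1,1))$ and (\ref{rightbox}). The one delicate step --- and essentially the only place the ``almost $(m|n)$-cross'' hypothesis on $\mu$ enters --- is the control of the correction terms in (\ref{Mdec}): without it, a copy of $L(\mu)$ could in principle be created (or, reading backwards, destroyed) in transferring a decomposition between the two categories, and ruling this out is precisely what Corollary~\ref{linkage}(ii), together with the size bound $|\nu^{(i)}|\le|\lambda|+2$, achieves.
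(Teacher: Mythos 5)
Your proof is correct and follows essentially the same line as the paper's: both rest on Theorem~\ref{lifting}, on the identifications $M_{m-n}(\lambda)=L(\lambda)$ and $M_{m-n}(\Box,\emp)=L(\Box,\emp)$ via (\ref{Mdec}) and Corollary~\ref{linkage}(i), on the size bound $|\nu^{(i)}|\le|\lambda|+2=|\mu|+2$, and on Corollary~\ref{linkage}(ii) to force $\nu^{(i)}=\mu$ whenever $D_{\nu^{(i)},\mu}(m-n)\neq 0$. The only real divergence is how you get Theorem~\ref{lifting} to apply to the \emph{triple} tensor product: the paper handles this by iterating the theorem (decompose $L(\Box,\emp)\otimes L(\lambda)$ first, then tensor each summand with $L(\Box,\emp)$ again), which only requires $M_{m-n}(\Box,\emp)=L(\Box,\emp)$ and $M_{m-n}(\lambda)=L(\lambda)$, whereas you instead decompose $L(\Box,\emp)^{\otimes 2}=L((2),\emp)\oplus L((1,1),\emp)$ in both categories and apply the theorem to the resulting binary products, which costs you the extra (correct, but not strictly necessary) verification that $D_{((2),\emp),\biemp}(\delta)=D_{((1,1),\emp),\biemp}(\delta)=0$. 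Both routes are valid; the iteration is marginally shorter and avoids the auxiliary computation, while your reduction has the virtue of making explicit why the two-variable hypothesis of Theorem~\ref{lifting} is not an obstruction.
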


\begin{proof}  (i) Set $\delta=m-n$ and write 
$L(\Box,\emp)^{\otimes 2}\otimes L(\lambda)=L(\mu^{(1)})\oplus\cdots\oplus L(\mu^{(k)})$
in $\uRep(GL_\delta)$.  Since $M_\delta(\Box,\emp)=L(\Box,\emp)$ and $M_\delta(\lambda)=L(\lambda)$ (formula (\ref{Mdec}) and Corollary \ref{linkage}(i)), it follows that  \begin{equation}\label{ms} L(\Box,\emp)^{\otimes 2}\otimes L(\lambda)=M_\delta(\mu^{(1)})\oplus\cdots\oplus M_\delta(\mu^{(k)})\end{equation}
in $\uRep(GL_t)$ (Theorem  \ref{lifting}).  In particular, $|\mu^{(i)}|\leq |\lambda|+2=|\mu|+2$ for all $i=1,\ldots,k$ (formulae (\ref{leftbox}), (\ref{Mdec}), and Proposition \ref{alprop}(i)).    Now, $L(\mu)$ is a summand of (\ref{ms}) in $\uRep(GL_t)$ if and only if $D_{\mu^{(i)},\mu}(\delta)\not=0$ for some $i$, which occurs if and only if $\mu=\mu^{(i)}$ for some $i$ (Corollary \ref{linkage}(ii)).  The proof of (ii) is similar.
\end{proof}

\begin{proposition}\label{almosttoalmost} If $\lambda$ and $\mu$ are both almost $(m|n)$-cross, then there exist almost $(m|n)$-cross bipartitions $\nu^{(0)},\ldots,\nu^{(k)}$ such that $\nu^{(0)}=\mu$, $\nu^{(k)}=\lambda$, and  $L(\nu^{(i-1)})$ is a summand of either $L(\Box,\emp)^{\otimes 2}\otimes L(\nu^{(i)})$ or $L(\emp,\Box)^{\otimes 2}\otimes L(\nu^{(i)})$ in $\uRep(GL_{m-n})$ for each $i=1,\ldots,k$.
\end{proposition}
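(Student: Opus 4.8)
The plan is to reduce to the case of a single one-box-bump and then transfer the clean decompositions available in $\uRep(GL_t)$ back to $\uRep(GL_{m-n})$ via Lemma \ref{redgen}. By the discussion in \S\ref{OBB}, there is a finite sequence of bipartitions $\nu^{(0)}=\mu, \nu^{(1)}, \ldots, \nu^{(k)}=\lambda$, each almost $(m|n)$-cross, such that for every $i=1,\ldots,k$ the bipartition $\nu^{(i-1)}$ is obtained from $\nu^{(i)}$ by a single one-box-bump; here we use that one-box-bumps are reversible, the inverse of a rightward one-box-bump being a leftward one-box-bump and conversely. Thus it suffices to prove the following: if $\nu'$ is obtained from an almost $(m|n)$-cross bipartition $\nu$ by a single one-box-bump, then $L(\nu')$ is a summand of $L(\Box,\emp)^{\otimes 2}\otimes L(\nu)$ or of $L(\emp,\Box)^{\otimes 2}\otimes L(\nu)$ in $\uRep(GL_{m-n})$.

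Suppose first that $\nu'$ is obtained from $\nu$ by a leftward one-box-bump, i.e.~by removing a box from row $j$ of $\nu^\white$ and adding a box to row $m+2-j$ of $\nu^\black$. Let $\rho$ be the bipartition with $\rho^\black=\nu^\black$ and $\rho^\white$ equal to $\nu^\white$ with the box in row $j$ removed. Since $\nu^\white$ has a removable box in row $j$, $\rho$ is a genuine bipartition and $\rho\in\Rem^\white(\nu)$; by the observation in \S\ref{OBB}, $\rho^\black=\nu^\black$ has an addable box in row $m+2-j$, so $\nu'\in\Add^\black(\rho)$. By formula (\ref{leftbox}) applied twice in $\uRep(GL_t)$, $L(\rho)$ is a summand of $L(\Box,\emp)\otimes L(\nu)$ and $L(\nu')$ is a summand of $L(\Box,\emp)\otimes L(\rho)$; hence $L(\nu')$ is a summand of $L(\Box,\emp)^{\otimes 2}\otimes L(\nu)$ in $\uRep(GL_t)$. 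As $\nu$ and $\nu'$ are both almost $(m|n)$-cross, Lemma \ref{redgen}(i) then yields that $L(\nu')$ is a summand of $L(\Box,\emp)^{\otimes 2}\otimes L(\nu)$ in $\uRep(GL_{m-n})$. The case of a rightward one-box-bump is entirely analogous: one interchanges the roles of the black and white components, of $\Add^\black$ and $\Add^\white$, of $\Rem^\black$ and $\Rem^\white$, of (\ref{leftbox}) and (\ref{rightbox}), and of Lemma \ref{redgen}(i) and (ii), using $L(\emp,\Box)^{\otimes 2}$ in place of $L(\Box,\emp)^{\otimes 2}$.

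There is no serious obstacle in this argument: the real content has already been isolated in Lemma \ref{redgen}, which lets us compute inside the generic category $\uRep(GL_t)$ where formulae (\ref{leftbox}) and (\ref{rightbox}) apply, and in \S\ref{OBB}, which supplies the connectivity of almost $(m|n)$-cross bipartitions under one-box-bumps. The only points that require care are bookkeeping: matching a leftward versus rightward one-box-bump with the correct tensor factor $L(\Box,\emp)$ versus $L(\emp,\Box)$, checking that the intermediate object $\rho$ obtained after the first box move is again a bipartition (which holds because a one-box-bump removes a removable box and adds an addable box), and verifying that the almost $(m|n)$-cross hypothesis of Lemma \ref{redgen} holds at each step, which it does since one-box-bumps preserve the almost $(m|n)$-cross property.
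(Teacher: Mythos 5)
Your proof is correct and follows the same approach as the paper's: reduce to a single one-box-bump via \S\ref{OBB}, compute the two-step decomposition in $\uRep(GL_t)$ using formulae (\ref{leftbox}) and (\ref{rightbox}), and transfer back to $\uRep(GL_{m-n})$ by Lemma \ref{redgen}. You spell out the intermediate bipartition $\rho$ and the rightward/leftward bookkeeping more explicitly than the paper's terse version, but the argument is identical in substance.
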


\begin{proof} Recall from \S\ref{OBB} that $\lambda$ can be obtained from $\mu$ by a finite sequence of one-box-bumps.  Let $k$ denote the number of one-box-bumps and let $\nu^{(i)}$ denote the almost $(m|n)$-cross bipartition arising after the $i$th one-box-bump for $i=0,\ldots,k$.   By Lemma \ref{redgen} we  must show  $L(\nu^{(i-1)})$ is a summand of either $L(\Box,\emp)^{\otimes 2}\otimes L(\nu^{(i)})$ or $L(\emp,\Box)^{\otimes 2}\otimes L(\nu^{(i)})$ in $\uRep(GL_{t})$ for all $i=1,\ldots,k$.  If $\nu^{(i)}$ is obtained from $\nu^{(i-1)}$ by a rightward one-box-bump, then $L(\nu^{(i-1)})$ is a summand of $L(\Box,\emp)^{\otimes 2}\otimes L(\nu^{(i)})$ by (\ref{leftbox}).  Similarly, if  $\nu^{(i)}$ is obtained from $\nu^{(i-1)}$ by a leftward one-box-bump, then $L(\nu^{(i-1)})$ is a summand of $L(\emp,\Box)^{\otimes 2}\otimes L(\nu^{(i)})$ by  (\ref{rightbox}).
\end{proof}

\begin{corollary}\label{Bothalmost} If $\lambda$ and $\mu$ are both almost $(m|n)$-cross, then $L(\mu)$ is a summand of $A\otimes L(\lambda)$ for some object $A$ in $\uRep(GL_{m-n})$.
\end{corollary}

\begin{proof} Let $\nu^{(0)},\ldots,\nu^{(k)}$ be as in Proposition \ref{almosttoalmost}.  Inducting on $k$, we can assume $L(\mu)$ is a summand of $A'\otimes L(\nu^{(k-1)})$ for some $A'$.  
If $L(\nu^{(k-1)})$ is a summand of $L(\Box,\emp)^{\otimes 2}\otimes L(\lambda)$, set $A=A'\otimes L(\Box,\emp)^{\otimes 2}$.  Otherwise, set $A=A'\otimes L(\emp,\Box)^{\otimes 2}$.
\end{proof}

\section{Ideals in  $\uRep(GL_\delta)$}\label{mainresult}

In this section we classify the ideals in $\uRep(GL_\delta)$, which is the main result of this paper.  To start, notice that Definition \ref{ideal} implies that ideals are always closed under taking direct summands: $A\oplus B\in\I$ implies $A,B\in\I$.  However, the converse implication does not follow from  Definition \ref{ideal} in an arbitrary braided monoidal category.  We call an ideal $\I$ \emph{additive} if it is closed under taking direct sums:  $A,B\in\I$ implies $A\oplus B\in\I$.  Our first result on ideals of $\uRep(GL_\delta)$ is that additivity is guaranteed.   

\begin{proposition}\label{additive} Ideals in $\uRep(GL_\delta)$ are additive for all $\delta\in\K$.  
\end{proposition}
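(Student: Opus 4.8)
The plan is to show that if $A$ and $B$ are objects of $\uRep(GL_\delta)$ lying in an ideal $\I$, then $A \oplus B \in \I$. Since every object decomposes into indecomposables and ideals are automatically closed under direct summands, it suffices to treat the case where $A = L(\lambda)$ and $B = L(\mu)$ are indecomposable, and moreover we may assume $L(\lambda), L(\mu) \neq 0$. The key mechanism is to produce a single ``universal'' object whose tensor product with things in the ideal dominates both $L(\lambda)$ and $L(\mu)$ as a summand — more precisely, I want to find one indecomposable object $L(\rho)$ in $\I$ (or at least an object in $\I$) together with objects $C, C'$ such that $L(\lambda)$ is a summand of $C \otimes L(\rho)$ and $L(\mu)$ is a summand of $C' \otimes L(\rho)$; combining this with the splitting criterion will let me build $L(\lambda) \oplus L(\mu)$ inside $\I$.

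First I would reduce to indecomposables as above. Next, the heart of the argument: given $L(\lambda), L(\mu) \in \I$, I apply Corollary \ref{toalmost} to each. Either ${\bf 1}$ is a summand of $A \otimes L(\lambda)$ for some $A$, in which case ${\bf 1} \in \I$ by condition (ii) of Definition \ref{ideal}, forcing $\I$ to be everything (via condition (i)), and then additivity is trivial. Otherwise $\delta \in \Z$ and there is an almost $(m|m-\delta)$-cross $\mu_\lambda \subset \lambda$ with $L(\mu_\lambda)$ a summand of $A \otimes L(\lambda)$, hence $L(\mu_\lambda) \in \I$; similarly an almost $(m'|m'-\delta)$-cross $\mu_\mu$ with $L(\mu_\mu) \in \I$. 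Using Proposition \ref{alprop}(ii) I can pass to a common ``small enough'' pair $(m'', n'')$ with $m'' \le \min(m,m')$, and find almost $(m''|n'')$-cross bipartitions contained in both; but the cleaner route is to use Corollary \ref{Bothalmost} to move between almost-cross bipartitions with the \emph{same} parameters, so I first need to arrange that $L(\lambda)$ and $L(\mu)$ each dominate (after tensoring) some $L(\nu)$ with $\nu$ almost $(m''|n'')$-cross for one common choice of $(m'',n'')$ — which Proposition \ref{alprop}(ii) together with Proposition \ref{Reductionpropn} provides. Once we have a single almost $(m''|n'')$-cross $\nu$ with $L(\nu) \in \I$, Proposition \ref{Reductionpropn} (applied since $\nu \subset \lambda$ and $\nu \subset \mu$ after suitable arrangement) gives that $L(\lambda)$ is a summand of $C \otimes L(\nu)$ and $L(\mu)$ is a summand of $C' \otimes L(\nu)$.

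Finally, from $L(\nu) \in \I$ I conclude $L(\nu) \oplus L(\nu) \in \I$: indeed $L(\nu) \otimes ({\bf 1} \oplus {\bf 1}) \cong L(\nu) \oplus L(\nu)$ lies in $\I$ by condition (i). More generally $L(\nu) \otimes D \in \I$ for every object $D$, and taking $D = C \oplus C'$ shows $(C \otimes L(\nu)) \oplus (C' \otimes L(\nu)) \in \I$; since $\I$ is closed under direct summands and $L(\lambda) \oplus L(\mu)$ is a summand of this object, we get $L(\lambda) \oplus L(\mu) \in \I$, as desired.

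The main obstacle I anticipate is the bookkeeping needed to arrange a \emph{single} almost-cross bipartition $\nu$ that simultaneously sits below (up to tensoring) both $L(\lambda)$ and $L(\mu)$: Corollary \ref{toalmost} applied separately to $L(\lambda)$ and $L(\mu)$ may a priori yield almost-cross bipartitions for different parameters $m$, and I need to reconcile these. The fix is that all relevant parameters $(m,n)$ satisfy $m - n = \delta$, so they are linearly ordered by $m$; using Proposition \ref{alprop}(ii) I can shrink the one with larger $m$ down to the smaller parameters, and then Corollary \ref{Bothalmost} together with Proposition \ref{Reductionpropn} glues everything to one $\nu$. I expect this reconciliation step, rather than the final splitting argument, to be where the real work lies.
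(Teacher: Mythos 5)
Your plan is fundamentally sound and uses the same toolkit as the paper (Corollary \ref{toalmost}, Corollary \ref{Bothalmost}, Proposition \ref{Reductionpropn}, Proposition \ref{alprop}(ii)); the final assembly step is a symmetric variant of what the paper does. Where the paper builds a single chain $L(\lambda)\to L(\mu)\to L(\nu)\to L(\lambda')$ and then observes $L(\lambda)\oplus L(\lambda')$ is a summand of $({\bf 1}\oplus A)\otimes L(\lambda)$, you instead pivot around a common $L(\nu)\in\I$ and take a summand of $(C\oplus C')\otimes L(\nu)$; both are valid. The one place your write-up is imprecise is the parenthetical ``applied since $\nu\subset\lambda$ and $\nu\subset\mu$ after suitable arrangement'': there is in general no single almost $(m''|n'')$-cross bipartition contained in both $\lambda$ and $\mu$, and Proposition \ref{alprop}(ii) plus Proposition \ref{Reductionpropn} alone do not produce one (Proposition \ref{Reductionpropn} goes the wrong way, from a smaller bipartition up to a larger one). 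What actually happens is this: after passing to the smaller parameter pair $(m,n)$, you have $\mu_\lambda\subset\lambda$ almost $(m|n)$-cross with $L(\mu_\lambda)\in\I$, and by Proposition \ref{alprop}(ii) there is a possibly \emph{different} almost $(m|n)$-cross $\nu'\subset\mu$; Corollary \ref{Bothalmost} is what bridges $L(\mu_\lambda)$ to $L(\nu')$ (putting $L(\nu')\in\I$), and then Proposition \ref{Reductionpropn} lifts $L(\nu')$ to $L(\mu)$ and (via another Bothalmost bridge back to $\mu_\lambda$) to $L(\lambda)$. You do gesture at this when you say ``Corollary \ref{Bothalmost} together with Proposition \ref{Reductionpropn} glues everything to one $\nu$,'' so the idea is there; you should just drop the false claim that $\nu$ sits inside both bipartitions and instead make the Bothalmost bridging explicit, exactly as the paper does.
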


\begin{proof} Let $\I$ be an ideal in $\uRep(GL_\delta)$ and $A,B\in\I$.  If $A$ and $B$ are zero there is nothing to show, so we may assume $A\oplus B=L(\lambda^{(1)})\oplus\cdots\oplus L(\lambda^{(k)})$ for some bipartitions $\lambda^{(1)},\ldots,\lambda^{(k)}$.  In particular, each $L(\lambda^{(i)})\in\I$ since $L(\lambda^{(i)})$ is a summand of either $A$ or $B$.  Since $\uRep(GL_\delta)$ is additive, we may assume $\I$ is proper so that ${\bf 1}\not\in\I$.  Hence, each $\lambda^{(i)}$  does not satisfy part (i) of Corollary \ref{toalmost}.  
Consider the nonnegative integers $m_1,\ldots,m_k$ and bipartitions $\mu^{(1)},\ldots,\mu^{(k)}$ such that $\mu^{(i)}$ is almost $(m_i|m_i-\delta)$-cross and $\mu^{(i)}\subset\lambda^{(i)}$ for each $i$, prescribed by Corollary \ref{toalmost}(ii).  Let $j$ be such that $m_j\leq m_i$ whenever $1\leq i\leq k$.  Then each $\lambda^{(i)}$ is not $(m_j|m_j-\delta)$-cross, so there exist almost $(m_j|m_j-\delta)$-cross bipartitions $\nu^{(1)},\ldots,\nu^{(k)}$ with $\nu^{(i)}\subset\lambda^{(i)}$ for each $i$.  
Now, by Corollary \ref{toalmost} there exists an object $C$ such that $L(\mu^{(j)})$ is a summand of $C\otimes L(\lambda^{(j)})$.  By Corollary \ref{Bothalmost} there exist  objects $C_1,\ldots,C_k$ such that $L(\nu^{(i)})$ is a summand of $C_i\otimes L(\mu^{(j)})$ for each $i$. By Proposition \ref{Reductionpropn} there exist objects $C'_1,\ldots,C'_k$ such that $L(\lambda^{(i)})$ is a summand of $C'_i\otimes L(\nu^{(i)})$ for each $i$.  It follows that $A\oplus B$ is a summand of $\left( \bigoplus_i C'_i\right)\otimes\left(\bigoplus_i C_i\right)\otimes C\otimes L(\lambda^{(j)})\in\I$. 
\end{proof}

%\begin{proof}   Let $\I$ be an ideal in $\uRep(GL_\delta)$.  It suffices to show $L(\lambda)\oplus L(\lambda')\in\I$ whenever $L(\lambda),L(\lambda')\in\I$.  Since $\uRep(GL_\delta)$ is additive, we may assume $\I$ is nontrivial so that ${\bf 1}\not\in\I$.  Hence, $\lambda$ and $\lambda'$ do not satisfy part (i) of Corollary \ref{toalmost}.  Let $m$ (resp. $m'$) be a nonnegative integer and $\mu\subset\lambda$ (resp. $\mu'\subset\lambda'$) be an almost $(m|m-\delta)$-cross (resp. $(m'|m'-\delta)$-cross) bipartition prescribed by Corollary \ref{toalmost}(ii).   Without loss of generality assume $m\leq m'$.  In this case $\lambda'$ is not $(m|m-\delta)$-cross, hence there exists an almost $(m|m-\delta)$-cross bipartition $\nu\subset\lambda'$.  

%Now, by Corollary \ref{toalmost} there exists an object $A'$ such that $L(\mu)$ is a summand of $A\otimes L(\lambda)$.   By Corollary \ref{Bothalmost} there exists an object $A''$ such that $L(\nu)$ is a summand of $A''\otimes L(\mu)$.  By Proposition \ref{Reductionpropn} there exists an object $A'''$ such that $L(\lambda')$ is a summand of $A'''\otimes L(\nu)$.  Hence, setting $A=A'''\otimes A''\otimes A'$, it follows that $L(\lambda')$ is a summand of $A\otimes L(\lambda)$.  Therefore $L(\lambda)\oplus L(\lambda')$ is a summand of $({\bf 1}\oplus A)\otimes L(\lambda)\in\I.$\end{proof}

\subsection{The ideals $\cat{I}(m|n)$}  Fix $m,n\in\Z_{\geq0}$ and let $\I(m|n)$ be as in \S\ref{uniprop}.
The indecomposable objects in $\I(m|n)$ have been worked out in \cite{CW}:

%, let $\I(m|n)$ denote the ideal of $\uRep(GL_{m-n})$  consisting of all objects sent to zero by  $F_{m|n}$.  The indecomposable objects in $\I(m|n)$ have been worked out in \cite{CW}:

\begin{theorem}\label{vanish} \cite[Theorem 8.7.6]{CW} $L(\lambda)\in\I(m|n)$ if and only if $\lambda$ is not $(m|n)$-cross.
\end{theorem}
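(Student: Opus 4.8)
The statement to prove is Theorem \ref{vanish}: $L(\lambda)\in\I(m|n)$ if and only if $\lambda$ is not $(m|n)$-cross. Since this is quoted verbatim from \cite[Theorem 8.7.6]{CW}, the honest ``proof'' is just a citation; but let me sketch how one would establish it from the tools assembled in the excerpt, since the whole architecture of \S\ref{almost}--\S\ref{decompo} is clearly designed to make this accessible.

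The plan is to analyze the tensor functor $F_{m|n}\colon\uRep(GL_\delta)\to\Rep(GL(m|n))$ on indecomposable objects. First I would reduce to understanding when $F_{m|n}(L(\lambda))=0$. The key input is the decomposition theory: $F_{m|n}$ is a tensor functor sending the natural object $L(\Box,\emp)$ to the standard $(m|n)$-dimensional representation and $L(\emp,\Box)$ to its dual, and it is known (this is where one leans on \cite{CW}) that $F_{m|n}(L(\lambda))$ is either $0$ or an indecomposable object whose highest weight is determined combinatorially by $\lambda$, $m$, and $n$. The condition for nonvanishing should be read off from Brundan--Stroppel's weight-diagram combinatorics: $F_{m|n}(L(\lambda))\ne0$ precisely when the weight diagram $x_\lambda(m-n)$ can be ``truncated'' to a legal $\gl(m|n)$ weight, and \cite[\S8.7]{CW} identifies this legality condition with the $(m|n)$-cross property, i.e.\ the existence of $k$ with $0\le k\le m$ and $\lambda^\black_{k+1}+\lambda^\white_{m-k+1}\le n$.

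The two directions would go as follows. For the ``if'' direction (not $(m|n)$-cross $\implies$ $L(\lambda)\in\I(m|n)$): one shows directly that a non-$(m|n)$-cross $\lambda$ forces $F_{m|n}(L(\lambda))=0$, either by a dimension/character count --- computing $\dim F_{m|n}(L(\lambda))$ via the formula for categorical dimension in $\uRep(GL_\delta)$ and showing it vanishes --- or by the weight-diagram argument above, showing the putative highest weight is not dominant for $\gl(m|n)$. For the ``only if'' direction ($(m|n)$-cross $\implies$ $L(\lambda)\notin\I(m|n)$): given an $(m|n)$-cross $\lambda$, one exhibits $F_{m|n}(L(\lambda))$ as a nonzero (indecomposable tilting) module, again via the explicit highest-weight recipe. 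Here Proposition \ref{alwt} and the structure of almost $(m|n)$-cross bipartitions serve as the boundary case organizing the induction: every $(m|n)$-cross bipartition is contained in an almost $(m|n)$-cross one, and one propagates nonvanishing downward.

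The main obstacle is that a genuinely self-contained proof requires importing the explicit description of $F_{m|n}$ on morphisms (via walled Brauer diagrams and the evaluation/coevaluation maps of the standard $\gl(m|n)$-module) together with the precise match between ``legal weight diagram for $\gl(m|n)$'' and the $(m|n)$-cross condition --- both of which are substantial results in \cite{CW} (items I and IV in \S1.3) and not reproved in this excerpt. Consequently the right move in this paper is simply to cite \cite[Theorem 8.7.6]{CW}; I would not attempt to reconstruct the walled-Brauer-diagram computation here, as it lies outside the combinatorial toolkit developed in \S\ref{bwt}--\S\ref{decompo} and would be the hard technical core if one did insist on a from-scratch argument.
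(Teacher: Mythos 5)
The paper gives no proof of this theorem: it is quoted with the citation to \cite[Theorem 8.7.6]{CW} and the result is taken as a black box. Your conclusion --- that the right move is simply to cite CW --- therefore agrees exactly with what the paper does.

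That said, the optional sketch you append has a genuine mathematical error in the ``only if'' direction. You write that ``every $(m|n)$-cross bipartition is contained in an almost $(m|n)$-cross one, and one propagates nonvanishing downward.'' This is false on two counts. First, almost $(m|n)$-cross bipartitions all have size $(m+1)(n+1)$ by Proposition \ref{alprop}(i), while $(m|n)$-cross bipartitions have unbounded size (e.g.\ any $(\lambda^\black,\emp)$ with a single very long row and $m\geq 1$ is $(m|n)$-cross), so most $(m|n)$-cross bipartitions are not contained in any almost $(m|n)$-cross one. The correct containment statement, used in the paper's Lemma \ref{ImninI}, runs the other way: every non-$(m|n)$-cross bipartition contains an almost $(m|n)$-cross one. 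Second, almost $(m|n)$-cross bipartitions are by definition \emph{not} $(m|n)$-cross, so $F_{m|n}(L(\lambda))=0$ for them; there is no nonvanishing to propagate from these boundary objects. A more plausible route for the ``only if'' direction is via the fullness of $F_{m|n}$ and the identification of the indecomposable summands of mixed tensor powers of the natural $\gl(m|n)$-module with $(m|n)$-cross bipartitions, which is what CW actually does.

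A smaller caveat: the suggested ``dimension/character count'' for the ``if'' direction does not by itself show $F_{m|n}(L(\lambda))=0$. The target category $\Rep(GL(m|n))$ contains many nonzero objects of categorical (super)dimension zero, so preservation of dimension under the tensor functor $F_{m|n}$ only gives $\dim F_{m|n}(L(\lambda))=0$, not vanishing. You hedge this by offering the weight-diagram argument as an alternative, and indeed that is the one that can be made to work.

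Since the paper also just cites CW, none of these issues affects the paper's logic; they only affect the heuristic sketch you chose to add.
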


The theorem above allows us to prove the following lemma, which will be crucial in the proof of the classification of ideals in $\uRep(GL_\delta)$.

\begin{lemma}\label{ImninI} Suppose $\I$ is an ideal in $\uRep(GL_{m-n})$ and $\lambda$ is an almost $(m|n)$-cross bipartition.  If $L(\lambda)\in\I$, then $\I(m|n)\subset\I$.
\end{lemma}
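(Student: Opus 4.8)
The plan is to show that $\I(m|n)\subset\I$ by proving every indecomposable object of $\I(m|n)$ lies in $\I$, using the hypothesis that the single object $L(\lambda)$ (with $\lambda$ almost $(m|n)$-cross) is in $\I$. By Theorem \ref{vanish}, the indecomposable objects of $\I(m|n)$ are exactly the $L(\mu)$ with $\mu$ \emph{not} $(m|n)$-cross, so it suffices to show $L(\mu)\in\I$ for every such $\mu$. The key point is that any bipartition $\mu$ which is not $(m|n)$-cross must contain an almost $(m|n)$-cross bipartition: starting from $\mu$ and removing boxes one at a time, we eventually reach a bipartition that is still not $(m|n)$-cross but all of whose proper sub-bipartitions are $(m|n)$-cross (the empty bipartition is $(0|0)$-cross, hence $(m|n)$-cross whenever $n\geq 0$, so this process does terminate at such a minimal one), and that bipartition is by definition almost $(m|n)$-cross. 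Call it $\mu'$, so $\mu'\subset\mu$ and $\mu'$ is almost $(m|n)$-cross.

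The argument then proceeds in three steps, each invoking a result already established. First, since $\lambda$ and $\mu'$ are both almost $(m|n)$-cross, Corollary \ref{Bothalmost} gives an object $A$ in $\uRep(GL_{m-n})$ such that $L(\mu')$ is a summand of $A\otimes L(\lambda)$; since $L(\lambda)\in\I$, condition (i) of Definition \ref{ideal} gives $A\otimes L(\lambda)\in\I$, and then condition (ii) (a summand is a retract) gives $L(\mu')\in\I$. Second, since $\mu'\subset\mu$, Proposition \ref{Reductionpropn} gives an object $B$ such that $L(\mu)$ is a summand of $B\otimes L(\mu')$; the same two ideal axioms then yield $L(\mu)\in\I$. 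Since $\mu$ was an arbitrary bipartition that is not $(m|n)$-cross, Theorem \ref{vanish} lets us conclude that every indecomposable object of $\I(m|n)$ lies in $\I$.

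Finally, I would close the gap between ``all indecomposable objects of $\I(m|n)$ lie in $\I$'' and ``$\I(m|n)\subset\I$''. An arbitrary object of $\uRep(GL_{m-n})$ decomposes as a finite direct sum of indecomposables (the category is Karoubian with objects of finite length), and if such a sum lies in $\I(m|n)$ then, since $\I(m|n)$ is closed under direct summands, each indecomposable summand lies in $\I(m|n)$, hence in $\I$ by the above, hence the whole sum lies in $\I$ by Proposition \ref{additive} (additivity of ideals in $\uRep(GL_{m-n})$).

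I do not expect a serious obstacle here; every ingredient is already in place. The one point requiring a little care is the elementary combinatorial claim that a non-$(m|n)$-cross bipartition contains an almost $(m|n)$-cross one — this follows directly from the definition in \S\ref{almost} together with the fact that $\biemp$ is $(m|n)$-cross (take $k=0$; then $\emptyset_1 + \emptyset_{m+1} = 0 \leq n$), so the downward induction on size terminates correctly. Everything else is a formal consequence of Definition \ref{ideal}, Proposition \ref{additive}, Proposition \ref{Reductionpropn}, Corollary \ref{Bothalmost}, and Theorem \ref{vanish}.
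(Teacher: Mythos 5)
Your argument is correct and follows exactly the paper's own route: first use Corollary \ref{Bothalmost} to get all almost $(m|n)$-cross $L(\mu')$ into $\I$, then use Proposition \ref{Reductionpropn} together with the observation that every non-$(m|n)$-cross bipartition contains an almost $(m|n)$-cross one to get all of $\I(m|n)$'s indecomposables into $\I$, and finally invoke Theorem \ref{vanish} and Proposition \ref{additive} to conclude. The extra care you take in justifying the "contains an almost $(m|n)$-cross sub-bipartition" step and the passage from indecomposables to arbitrary objects is sound but not different in substance from what the paper leaves implicit.
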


\begin{proof}  If $L(\lambda)\in\I$, then by Corollary \ref{Bothalmost} $L(\mu)\in\I$ for every almost $(m|n)$-cross bipartition $\mu$.  Since every non-$(m|n)$-cross bipartition contains an almost $(m|n)$-bipartition, by Proposition \ref{Reductionpropn} $L(\mu)\in\I$ for every non-$(m|n)$-cross bipartition $\mu$.  Hence we are done by Theorem \ref{vanish} and Proposition \ref{additive}.
\end{proof}

\subsection{Classification of ideals}\label{main result}  We can now prove the main result of this paper:

\begin{theorem}\label{Classification} The set $\{\I(m|n) ~|~m,n\in\Z_{\geq0}, m-n=\delta\}$ is a complete set of pairwise distinct nontrivial proper ideals in $\uRep(GL_\delta)$.
\end{theorem}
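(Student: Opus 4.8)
The plan is to prove three things: (a) each $\I(m|n)$ is a nontrivial ideal of $\uRep(GL_\delta)$ when $m-n=\delta$; (b) the $\I(m|n)$ are pairwise distinct; (c) every nontrivial ideal equals some $\I(m|n)$. For (a), nontriviality follows because $F_{m|n}$ is a (nonzero) tensor functor, so $F_{m|n}(\mathbf{1})\neq 0$ and $\mathbf{1}\notin\I(m|n)$, while $\I(m|n)$ does contain nonzero objects by Theorem \ref{vanish} (e.g.\ any almost $(m|n)$-cross bipartition, or indeed any non-$(m|n)$-cross bipartition, of which there are plenty). That $\I(m|n)$ is an ideal in the sense of Definition \ref{ideal} is immediate from functoriality of $F_{m|n}$: if $Y\in\I(m|n)$ then $F_{m|n}(X\otimes Y)\cong F_{m|n}(X)\otimes F_{m|n}(Y)=0$, and if $X$ is a retract of $Y$ then $F_{m|n}(X)$ is a retract of $F_{m|n}(Y)=0$, hence $F_{m|n}(X)=0$.

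For (b), suppose $m-n=m'-n'=\delta$ with $n<n'$ (equivalently $m<m'$). I would exhibit an indecomposable object lying in exactly one of $\I(m|n)$, $\I(m'|n')$. By Theorem \ref{vanish} it suffices to find a bipartition $\lambda$ that is $(m'|n')$-cross but not $(m|n)$-cross; an almost $(m|n)$-cross bipartition $\lambda$ does the job, since such $\lambda$ is not $(m|n)$-cross by definition, while $m\le m'$ and $n\le n'$ together with Proposition \ref{alprop}(ii) (or directly from the defining inequality $\lambda^\black_{k+1}+\lambda^\white_{m-k+1}=n+1\le n'$, taking the appropriate $k$) show $\lambda$ is $(m'|n')$-cross. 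Hence $L(\lambda)\in\I(m|n)\setminus\I(m'|n')$, so the two ideals are distinct.

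The heart of the argument is (c), and this is where the technical machinery of \S\ref{decompo} gets used. Let $\I$ be a nontrivial ideal. Since $\I$ is nonzero, it contains some $L(\lambda)$ with $\lambda\neq\biemp$. Apply Corollary \ref{toalmost} to $\lambda$: alternative (i) would put $\mathbf{1}$ in $\I$ (as $\mathbf{1}$ is then a retract of $A\otimes L(\lambda)\in\I$, using Proposition \ref{additive} to absorb the direct sum into $\I$), contradicting nontriviality; so alternative (ii) holds, giving $\delta\in\Z$, an integer $m\ge 0$, and an almost $(m|m-\delta)$-cross bipartition $\mu\subset\lambda$ with $L(\mu)$ a summand of $A\otimes L(\lambda)\in\I$, whence $L(\mu)\in\I$. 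Set $n=m-\delta$. Lemma \ref{ImninI} then gives $\I(m|n)\subset\I$. For the reverse inclusion, take any $L(\kappa)\in\I$; I must show $L(\kappa)\in\I(m|n)$, i.e.\ (Theorem \ref{vanish}) that $\kappa$ is not $(m|n)$-cross. Suppose for contradiction $\kappa$ is $(m|n)$-cross. Running Corollary \ref{toalmost} on $\kappa$ again rules out alternative (i) by nontriviality, so it produces an integer $m'\ge 0$ and an almost $(m'|m'-\delta)$-cross bipartition $\kappa'\subset\kappa$ with $L(\kappa')\in\I$. But $\kappa'\subset\kappa$ and $\kappa$ being $(m|n)$-cross forces $\kappa'$ to be $(m|n)$-cross as well (a sub-bipartition of an $(m|n)$-cross bipartition is $(m|n)$-cross), contradicting that the almost $(m'|m'-\delta)$-cross $\kappa'$ is not $(m'|m'-\delta)$-cross while simultaneously being $(m|n)$-cross --- more carefully: any almost $(m'|n')$-cross bipartition has size $(m'+1)(n'+1)$ and sits inside no $(m|n)$-rectangle pattern, so $m'=m$ and $n'=n$, whence $\kappa'$ is almost $(m|n)$-cross, hence \emph{not} $(m|n)$-cross, contradicting $\kappa'\subset\kappa$ with $\kappa$ $(m|n)$-cross. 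Therefore $\kappa$ is not $(m|n)$-cross, i.e.\ $L(\kappa)\in\I(m|n)$, giving $\I\subset\I(m|n)$ and so $\I=\I(m|n)$.

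The main obstacle is the last step of (c): pinning down that the almost-cross bipartition extracted from an arbitrary $L(\kappa)\in\I$ forces $\kappa$ itself to be non-$(m|n)$-cross, i.e.\ showing $\I\subset\I(m|n)$ rather than merely $\I(m|n)\subset\I$. This requires knowing that the parameter pair $(m,n)$ produced by Corollary \ref{toalmost} is intrinsic to $\I$ and not just to the chosen object --- the cleanest way is the ``monotonicity'' observation that if $\I$ contains an indecomposable labelled by an almost $(m|n)$-cross bipartition and also contains $L(\kappa)$, then $\kappa$ cannot be $(m|n)$-cross, since otherwise (via Proposition \ref{Reductionpropn} applied to a sub-almost-$(m|n)$-cross bipartition of any non-$(m|n)$-cross bipartition together with Corollary \ref{Bothalmost}) $\I$ would contain $\mathbf{1}$; I would package this as a short lemma before the theorem if the direct argument above proves awkward. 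Everything else is a routine assembly of the results already established.
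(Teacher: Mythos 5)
Your overall plan — show each $\I(m|n)$ is a nontrivial ideal, show the $\I(m|n)$ are pairwise distinct, show every nontrivial ideal equals some $\I(m|n)$ — is the right one, and parts (a) and (b) are essentially correct (in (b), note that Proposition \ref{alprop}(ii) provides an almost $(m'|n')$-cross sub-bipartition when $m'\le m$, $n'\le n$, i.e.\ the opposite inequality from the one you invoke; but your direct argument with the defining inequality works once you note $\lambda^\white_{m'-k+1}\le\lambda^\white_{m-k+1}$ for $m'\ge m$).

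Part (c), however, has a genuine gap, and the fix you suggest in the final paragraph does not repair it. You pick an arbitrary $L(\lambda)\in\I$ with $\lambda\ne\biemp$ and extract $(m,n)$ from Corollary \ref{toalmost}. The issue, which you correctly flag as ``the main obstacle,'' is that this $(m,n)$ depends on the chosen $\lambda$ and need not be the right pair. When you apply Corollary \ref{toalmost} to some other $L(\kappa)\in\I$ with $\kappa$ assumed $(m|n)$-cross and obtain an almost $(m'|n')$-cross $\kappa'\subset\kappa$, you have two true facts: $\kappa'$ is $(m|n)$-cross (being a sub-bipartition of the $(m|n)$-cross $\kappa$) and $\kappa'$ is not $(m'|n')$-cross. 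These are \emph{not} contradictory unless $m'\ge m$. Your ``more carefully'' step asserts $m'=m$ and $n'=n$, but gives no real argument, and it is false: when $m'<m$ (and hence $n'<n$), an almost $(m'|n')$-cross bipartition is $(m|n)$-cross — e.g.\ with $m'=n'=0$, $m=n=1$, the bipartition $(\Box,\emp)$ is almost $(0|0)$-cross and is $(1|1)$-cross (take $k=0$: $\lambda^\black_1+\lambda^\white_2=1+0\le 1$). The ``monotonicity'' lemma you propose as a cleaner alternative — that $\I$ containing both an almost $(m|n)$-cross object and an $(m|n)$-cross object would force $\mathbf{1}\in\I$ — is also false: take $\I=\I(m'|n')$ with $m'<m$, $n'<n$; it contains every almost $(m|n)$-cross $L(\mu)$ (such $\mu$ is not $(m'|n')$-cross, since $\mu^\black_{k+1}+\mu^\white_{m'-k+1}\ge\mu^\black_{k+1}+\mu^\white_{m-k+1}=n+1>n'$ for $0\le k\le m'$) and also contains $(m|n)$-cross objects (e.g.\ every almost $(m'|n')$-cross $L(\kappa)$), yet $\mathbf{1}\notin\I(m'|n')$.

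The paper's proof avoids all of this by choosing $\lambda$ \emph{minimal} in $\I$ with respect to containment at the outset. Minimality kills the $\nu\subsetneqq\lambda$ summands in Proposition \ref{lowerlimit}, so $\lambda$ is \emph{itself} almost $(m|n)$-cross (no need for Corollary \ref{toalmost}). Then, for the reverse inclusion, Proposition \ref{Reductionpropn} reduces to minimal $\mu\in\I$, which is almost $(m'|n')$-cross by the same Proposition \ref{lowerlimit}; and the minimality of $\lambda$ combined with Proposition \ref{alprop}(ii) rules out $m'<m$, since otherwise one would find an almost $(m'|n')$-cross $\nu\subsetneqq\lambda$ with $L(\nu)\in\I(m'|n')\subset\I$. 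With $m'\ge m$, $n'\ge n$ in hand, $\I(m'|n')\subset\I(m|n)$ follows directly from the definition of $(m|n)$-cross, and $L(\mu)\in\I(m|n)$. So the missing ingredient in your proof is the choice of a \emph{minimal} element of $\I$: without it, Corollary \ref{toalmost} alone cannot single out the correct $(m,n)$.
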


\begin{proof}  %First, if $\delta\not\in\Z$ then $\uRep(GL_\delta)$ is semisimple \cite[Theorem ??]{CW}; hence it only has trivial ideals.  On the other hand, the set of ideals in the statement of the theorem is empty when $\delta\not\in\Z$.  Hence, it suffices to consider the case $\delta\in\Z$.  Now, s
It follows from Theorem \ref{vanish} that the ideals $\I(m|n)$ are pairwise distinct, nontrivial, and proper.  
Suppose $\I$ is a nontrivial proper ideal in $\uRep(GL_\delta)$ and let $\lambda$ be a bipartition which is minimal with respect to inclusion of bipartitions such that $L(\lambda)\in\I$.  Since $\I$ is proper, $\lambda\not=\biemp$.  Hence, by Proposition \ref{lowerlimit} and the minimality of $\lambda$, there exist $m,n\geq0$ with $m-n=\delta$ such that $\lambda$ is almost $(m|n)$-cross.  It follows from Lemma \ref{ImninI} that $\I(m|n)\subset\I$.

To complete the proof of the theorem, we show $\I\subset\I(m|n)$.  To do so, since $\I(m|n)$ is additive, it suffices to show $L(\mu)\in\I(m|n)$ whenever $\mu$ is a bipartition with $L(\mu)\in\I$.  By Proposition \ref{Reductionpropn} we may assume $\mu$ is minimal in $\I$ with respect to containment of bipartitions.  In this case, by Proposition \ref{lowerlimit}  $\mu$ is almost $(m'|n')$-cross for some $m',n'\geq 0$ with $m'-n'=\delta$.  It follows from Lemma \ref{ImninI} that $\I(m'|n')\subset\I$.  Hence, by the minimality of $\lambda$ along with Proposition \ref{alprop}(ii), $m'\geq m$ and $n'\geq n$.  Thus $L(\mu)\in\I(m'|n')\subset\I(m|n)$.  \end{proof}

\nocite{BS,BS1,BS2,BS3,BS4}

\renewcommand{\bibname}{\textsc{references}} 
\bibliographystyle{alphanum}	
	%\addcontentsline{toc}{chapter}{\bibname}
	\bibliography{references}

\end{document}